\documentclass[smallextended]{svjour3}       
\smartqed  
\usepackage{amsfonts}
\usepackage{amsmath}
\usepackage{graphicx}
\usepackage[english]{babel}



\begin{document}

\title{Bohr's equivalence relation in the space of Besicovitch almost periodic functions}

\author{J.M. Sepulcre \and
        T. Vidal
} \institute{J.M. Sepulcre  \and T. Vidal   \at
              Department of Mathematics\\
              University of Alicante\\
              03080-Alicante, Spain\\
             \email{JM.Sepulcre@ua.es, tmvg@alu.ua.es}
}

\date{Received: date / Accepted: date}
\maketitle
\begin{abstract}
Based on Bohr's equivalence relation which was established for general Dirichlet series, in this paper we introduce a new equivalence relation on the space of almost periodic functions in the sense of Besicovitch, $B(\mathbb{R},\mathbb{C})$, defined in terms of polynomial approximations. From this, we show that in an important subspace $B^2(\mathbb{R},\mathbb{C})\subset B(\mathbb{R},\mathbb{C})$, where Parseval's equality and Riesz-Fischer theorem holds, its equivalence classes are sequentially compact and  the family of translates of a function belonging to this subspace is dense in its own class.


\keywords{Almost periodic functions \and Besicovitch almost periodic functions \and Bochner's theorem \and Exponential sums \and Fourier series}
 \subclass{42A75 \and 42A16\and 42B05 \and 46xx \and 42Axx \and 30B50 \and 30Bxx}
\end{abstract}

\section{Introduction}

The class of almost periodic functions, whose theory was created and developed in its main features by H. Bohr during the $1920$'s, is the class of continuous functions possessing a certain structural property, which is a generalization of pure periodicity. This theory opened a way to study a wide class of trigonometric series of the general type and even exponential series. 
In this context, we can cite, among others, the papers \cite{Besi,Bochner,Bohr,Bohr2,Corduneanu,Jessen}.

%
%

Let $f(t)$ be a real or complex function of an unrestricted real variable $t$. The notion of almost periodicity given by Bohr involves the fact that $f(t)$ must be continuous, and for every $\varepsilon>0$ there corresponds a number $l=l(\varepsilon)>0$ such that each interval of length $l$ contains a number $\tau$ satisfying
$|f(t+\tau)-f(t)|\leq\varepsilon$ for all $t$. As in \cite{Corduneanu}, we will denote as $AP(\mathbb{R},\mathbb{C})$ the space of almost periodic functions in the sense of this definition (Bohr's condition). 
A very important result of this
theory is the approximation theorem according to which the class of almost periodic functions $AP(\mathbb{R},\mathbb{C})$ coincides with the class of limit functions of uniformly convergent sequences of trigonometric polynomials of the type
\begin{equation}\label{t1}
a_1e^{i\lambda_1t}+\ldots+a_ne^{i\lambda_nt}
\end{equation}
with arbitrary real exponents $\lambda_j$ and arbitrary complex coefficients $a_j$. Moreover,
S. Bochner observed that Bohr's notion of almost periodicity of a function $f$ is equivalent to the relative compactness, in the sense of uniform convergence, of the family of its translates $\{f(t+h)\}$, $h\in\mathbb{R}$.

In
the course of time, some variants and extensions of Bohr's concept have been
introduced, most notably by A. S. Besicovitch, W. Stepanov and H. Weyl. We refer the reader to the papers by Besicovitch \cite[Chapter II]{Besi}, Bohr and F{\oe}lner \cite{BohrFolner}, Corduneanu \cite{Corduneanu}, and by Andres, Bersani and Grande \cite{Andres} and the references therein for a comprehensive treatment of this subject.

In particular, A.S. Besicovitch enlarged the class of almost periodic functions by considering the convergence of sequences of functions in a more general sense than uniform convergence. In this way, the Besicovitch spaces $B^p(\mathbb{R},\mathbb{C})$, $1\leq p<\infty$,
are obtained by the completion of the trigonometric polynomials of the form (\ref{t1}) with respect to the seminorms
$$\left(\limsup_{l\to\infty}(2l)^{-1}\int_{-l}^l|f(t)|^p dt\right)^{1/p}.$$ This topology is certainly weaker than that of the uniform convergence.
In particular, the space $B^1(\mathbb{R},\mathbb{C})$ is denoted by $B(\mathbb{R},\mathbb{C})$ and contains $AP(\mathbb{R},\mathbb{C})$, $B^2(\mathbb{R},\mathbb{C})$ and all variants of almost periodic functions which were mentioned above.

Moreover, for any function $f\in B(\mathbb{R},\mathbb{C})$ there exists the mean value
\begin{equation}\label{Meanvalo}
M(f)=\lim_{l\to\infty}(2l)^{-1}\int_{-l}^lf(t)dt
\end{equation}
and, at most, a countable
set of values of $\lambda_k\in\mathbb{R}$  such that
$a_k=a(f,\lambda_k) = M(f(t)e^{-\lambda_k t}) \neq 0$.
Thus the series
$\sum_{k\geq 1}a_ke^{i\lambda_kt}$
is called the Fourier series of $f$ \cite[Section 4.2]{Corduneanu}. 
Also, $\lambda_k$ and $a_k$ are called the Fourier
exponents and coefficients of the
function $f$, respectively. 

In the case that $f\in B^2(\mathbb{R},\mathbb{C})$, with $\sum_{k\geq 1}a_ke^{i\lambda_kt}$ the Fourier series of $f$, it is accomplished the Parseval's equality \cite[p. 109]{Besi}
$$\sum_{k\geq 1}|a_k|^2=M(|f(t)|^2)<\infty.$$ In this respect, if $f_1,f_2\in B^2(\mathbb{R},\mathbb{C})$, then $f_1$ and $f_2$ have the same Fourier series if and only if
$M(|f_1(t)-f_2(t)|^2)=0.$ That is, two functions satisfying this condition belong to the same class of equivalence defined in terms of the Fourier series. This equivalence relation is inherent to the classes $B^p(\mathbb{R},\mathbb{C})$ and it is different
from the generalization of Bohr's equivalence of Definition \ref{DefEquiv00} which is the main
tool of this paper.

Besicovitch's generalization is interesting because, for this
extension, the analogue of the Riesz-Fischer theorem is also valid,
that is to say, any trigonometric series $\sum_{n\geq 1} a_ne^{i\lambda_nt}$ with $\sum_{n\geq 1}|a_n|^2$ finite is the
Fourier series of a $B^2(\mathbb{R},\mathbb{C})$ almost periodic function \cite[p. 110]{Besi} (in this sense, $B^2(\mathbb{R},\mathbb{C})$ is also called $AP_2(\mathbb{R},\mathbb{C})$ in \cite{Corduneanu}). This is not the case for some Stepanov or Weyl functions \cite{Jessen}. As a consequence of the above, a Fourier series $\sum_{n\geq 1}a_ne^{i\lambda_nt}$ so that $\sum_{n\geq 1}|a_n|^2<\infty$ represents an equivalence class of functions in $B^2(\mathbb{R},\mathbb{C})$. 

In this paper we extend Bohr's equi\-valence relation to the Fourier series associated with the Besicovitch almost periodic functions, and hence to the Besicovitch almost periodic functions too. In this way, in view of the analogue of the Riesz-Fischer theorem and with respect to the topology of $B^2(\mathbb{R},\mathbb{C})$, the main result of our paper shows that, fixed an almost periodic function in $B^2(\mathbb{R},\mathbb{C})$, the limit points of the set of its translates are precisely the functions which are equiva\-lent to it (see Theorem \ref{mth0} in this paper).
This means that the Bochner-type property, which is satisfied for the Besicovitch classes of almost periodic functions defined as above in terms of polynomials approximations (see \cite[Definition 5.5, Definition 5.17 and Theorem 5.34]{Andres} or \cite[Section 3.4, p. 65]{Corduneanu}), is now refined for $B^2(\mathbb{R},\mathbb{C})$ in the sense that we show that the condition of almost periodicity in the Besicovitch sense implies that every sequence of translates has a subsequence that converges in $B^2(\mathbb{R},\mathbb{C})$ to an equivalent function. 


\section{Preliminary definitions and results on exponential sums}

We shall refer to the expressions of the type
$$P_1(p)e^{\lambda_1p}+\ldots+P_j(p)e^{\lambda_jp}+\ldots$$
as exponential sums, where the frequencies $\lambda_j$ are complex numbers and the $P_j(p)$ are polynomials in $p$.  In this paper we are going to consider some functions which are associated with a concrete
subclass of these exponential sums, where the parameter $p$ will be changed by $t$ in the real case. In this way, as in \cite{SV}, we take the following definition.

\begin{definition}
Let $\Lambda=\{\lambda_1,\lambda_2,\ldots,\lambda_j,\ldots\}$ be an arbitrary countable set of distinct real numbers, which we will call a set of exponents or frequencies. We will say that an exponential sum is in the class $\mathcal{S}_{\Lambda}$ if it is a formal series of type
 \begin{equation}\label{eqqnew}
\sum_{j\geq 1}a_je^{\lambda_jp},\ a_j\in\mathbb{C},\ \lambda_j\in\Lambda.
\end{equation}
\end{definition}
%
%

We next introduce an equivalence relation on the classes $\mathcal{S}_{\Lambda}$.

\begin{definition}\label{DefEquiv00}
Given an arbitrary countable set $\Lambda=\{\lambda_1,\lambda_2,\ldots,\lambda_j,\ldots\}$ of distinct real numbers, consider $A_1(p)$ and $A_2(p)$ two exponential sums in the class $\mathcal{S}_{\Lambda}$, say
$A_1(p)=\sum_{j\geq1}a_je^{\lambda_jp}$ and $A_2(p)=\sum_{j\geq1}b_je^{\lambda_jp}.$
We will say that $A_1$ is equivalent to $A_2$ (in that case, we will write $A_1\shortstack{$_{{\fontsize{6}{7}\selectfont *}}$\\$\sim$} A_2$) if for each integer value $n\geq 1$, with $n\leq \sharp\Lambda$,
there exists a $\mathbb{Q}$-linear map $\psi_n:V_n\to\mathbb{R}$, where $V_n$ is the $\mathbb{Q}$-vector space generated by $\{\lambda_1,\lambda_2,\ldots,\lambda_n\}$, such that
$$b_j=a_je^{i\psi_n(\lambda_j)},\ j=1,\ldots,n.$$
\end{definition}

It is plain that the relation $\shortstack{$_{{\fontsize{6}{7}\selectfont *}}$\\$\sim$}$ considered in the foregoing definition is an equiva\-lence relation. 

Let
$G_{\Lambda}=\{g_1, g_2,\ldots, g_k,\ldots\}$ be a basis of the
vector space over the rationals generated by a set $\Lambda$ of exponents, 
which implies that $G_{\Lambda}$ is linearly independent over the rationals and each $\lambda_j$ is expressible as a finite linear combination of terms of $G_{\Lambda}$, say
\begin{equation}\label{errej}
\lambda_j=\sum_{k=1}^{q_j}r_{j,k}g_k,\ \mbox{for some }r_{j,k}\in\mathbb{Q}.
\end{equation}
By abuse of notation, we will say that $G_{\Lambda}$ is a basis for $\Lambda$. Moreover, we will say that $G_{\Lambda}$ is an integral basis for $\Lambda$ when $r_{j,k}\in\mathbb{Z}$ for any $j,k$. By taking this
into account, the equivalence relation introduced in Definition \ref{DefEquiv00} can be characterized in terms of a basis for $\Lambda$ (see \cite[Proposition 1']{SV}). 
\begin{proposition}\label{DefEquiv}
Given $\Lambda=\{\lambda_1,\lambda_2,\ldots,\lambda_j,\ldots\}$ a set of exponents, consider $A_1(p)$ and $A_2(p)$ two exponential sums in the class $\mathcal{S}_{\Lambda}$, say
$A_1(p)=\sum_{j\geq1}a_je^{\lambda_jp}$ and $A_2(p)=\sum_{j\geq1}b_je^{\lambda_jp}.$
Fixed a basis $G_{\Lambda}$ for $\Lambda$, for each $j\geq1$ let $\mathbf{r}_j$ be the vector of rational components satisfying (\ref{errej}).  
Then $A_1\shortstack{$_{{\fontsize{6}{7}\selectfont *}}$\\$\sim$} A_2$ 
if and only if for each integer value $n\geq1$, with $n\leq\sharp\Lambda$, there exists a vector $\mathbf{x}_n=(x_{n,1},x_{n,2},\ldots,x_{n,k},\ldots)\in \mathbb{R}^{\sharp G_{\Lambda}}$
such that $b_j=a_j e^{<\mathbf{r}_j,\mathbf{x}_n>i}$ for $j=1,2,\ldots,n$.

\noindent Furthermore, if $G_{\Lambda}$ is an integral basis for $\Lambda$ then $A_1\shortstack{$_{{\fontsize{6}{7}\selectfont *}}$\\$\sim$} A_2$ 
if and only if there exists $\mathbf{x}_0=(x_{0,1},x_{0,2},\ldots,x_{0,k},\ldots)\in \mathbb{R}^{\sharp G_{\Lambda}}$
such that $b_j=a_j e^{<\mathbf{r}_j,\mathbf{x}_0>i}$ for every $j\geq 1$. 
\end{proposition}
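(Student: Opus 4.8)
The plan is to prove the two equivalences in turn: the first is a purely linear‑algebraic reformulation of Definition \ref{DefEquiv00} in coordinates relative to the basis $G_{\Lambda}$, and the second adds a compactness step that uses the integrality hypothesis in an essential way.

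For the first part, the key remark I would use is that, since $G_{\Lambda}$ is linearly independent over $\mathbb{Q}$, a $\mathbb{Q}$-linear map $\Psi\colon V_{\Lambda}\to\mathbb{R}$ on the whole $\mathbb{Q}$-vector space $V_{\Lambda}$ generated by $\Lambda$ (equivalently, by $G_{\Lambda}$) amounts exactly to a choice of real numbers $\mathbf{x}=(x_k)_k=(\Psi(g_k))_k\in\mathbb{R}^{\sharp G_{\Lambda}}$, and under this identification $\Psi(\lambda_j)=\sum_k r_{j,k}x_k=<\mathbf{r}_j,\mathbf{x}>$ by (\ref{errej}). Fix $n$ with $n\le\sharp\Lambda$. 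Given a $\mathbb{Q}$-linear $\psi_n\colon V_n\to\mathbb{R}$ as in Definition \ref{DefEquiv00}, I would extend it to a $\mathbb{Q}$-linear map $\Psi\colon V_{\Lambda}\to\mathbb{R}$ (possible since $V_n$ is a subspace: extend a $\mathbb{Q}$-basis of $V_n$ and define $\Psi$ arbitrarily on the added vectors) and put $\mathbf{x}_n:=(\Psi(g_k))_k$; then $<\mathbf{r}_j,\mathbf{x}_n>=\psi_n(\lambda_j)$ for $j\le n$, whence $b_j=a_je^{i\psi_n(\lambda_j)}=a_je^{<\mathbf{r}_j,\mathbf{x}_n>i}$. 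Conversely, given $\mathbf{x}_n\in\mathbb{R}^{\sharp G_{\Lambda}}$, the restriction to $V_n$ of the $\mathbb{Q}$-linear map determined by $g_k\mapsto x_{n,k}$ is a $\psi_n$ with $\psi_n(\lambda_j)=<\mathbf{r}_j,\mathbf{x}_n>$ for $j\le n$. Quantifying over $n$ and unwinding Definition \ref{DefEquiv00} gives the stated equivalence.

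For the second part, one implication is free: if a single $\mathbf{x}_0$ works for every $j\ge1$, then taking $\mathbf{x}_n:=\mathbf{x}_0$ at each level shows, by the first part, that $A_1$ and $A_2$ are equivalent (this uses nothing about integrality). For the converse I would proceed as follows. Assuming $A_1$ and $A_2$ equivalent, the first part provides vectors $\mathbf{x}_n\in\mathbb{R}^{\sharp G_{\Lambda}}$ with $b_j=a_je^{<\mathbf{r}_j,\mathbf{x}_n>i}$ for all $j\le n$. Since $r_{j,k}\in\mathbb{Z}$ for every $j,k$, adding an integer multiple of $2\pi$ to any single coordinate of $\mathbf{x}_n$ leaves every quantity $e^{<\mathbf{r}_j,\mathbf{x}_n>i}$ unchanged; hence I may reduce each coordinate modulo $2\pi$ and assume $\mathbf{x}_n\in[0,2\pi)^{\sharp G_{\Lambda}}$, still with $b_j=a_je^{<\mathbf{r}_j,\mathbf{x}_n>i}$ for all $j\le n$. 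Now $[0,2\pi]^{\sharp G_{\Lambda}}$ is compact and metrizable (a countable product of compact metric spaces), so a diagonal extraction yields a subsequence $(\mathbf{x}_{n_l})_l$ converging coordinatewise to some $\mathbf{x}_0$. Finally I would fix $j$: if $a_j=0$ then $b_j=0$ and there is nothing to check; if $a_j\neq0$ then $b_j/a_j=e^{<\mathbf{r}_j,\mathbf{x}_{n_l}>i}$ for all $l$ with $n_l\ge j$, and since $\mathbf{r}_j$ has only finitely many nonzero entries (the sum in (\ref{errej}) is finite), $<\mathbf{r}_j,\mathbf{x}_{n_l}>\to<\mathbf{r}_j,\mathbf{x}_0>$, so continuity of $z\mapsto e^{iz}$ forces $b_j/a_j=e^{<\mathbf{r}_j,\mathbf{x}_0>i}$. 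This holds for every $j\ge1$, as desired.

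The main obstacle is exactly this last passage from the family $(\mathbf{x}_n)_n$ to a single $\mathbf{x}_0$. What makes it work is that integrality of the basis turns $e^{<\mathbf{r}_j,\,\cdot\,>i}$ into a function that is $2\pi$-periodic in each coordinate separately, so that reducing to a bounded box destroys none of the finitely many conditions present at level $n$; for a general, non-integral basis one can only normalize one linear form $<\mathbf{r}_j,\cdot>$ at a time and the $\mathbf{x}_n$ need not stay bounded, which is precisely why the ``for all $j$'' statement requires the integrality hypothesis. Once boundedness is in hand, the remaining ingredients — sequential compactness, the diagonal argument when $\sharp G_{\Lambda}=\aleph_0$, and the finite support of each $\mathbf{r}_j$ — are routine.
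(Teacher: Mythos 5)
Your proof is correct, and while the first equivalence follows the paper's coordinate translation almost verbatim (your explicit extension of $\psi_n$ from $V_n$ to the whole space generated by $\Lambda$ before evaluating on the $g_k$ is in fact a welcome clarification, since the paper applies $\psi_n$ to basis elements that need not lie in $V_n$), your argument for the ``furthermore'' part is genuinely different from the paper's. The paper normalizes each $\mathbf{x}_n$ into $[0,2\pi)^{\sharp G_{\Lambda}}$ (with the convention $x_{n,k}=0$ whenever $r_{j,k}=0$ for all $j\leq n$), asserts that this normalized vector is \emph{unique} and therefore that the coordinates stabilize as $n$ grows ($x_{m,k}=x_{n,k}$ for $m>n$ whenever $x_{n,k}\neq 0$), and assembles $\mathbf{x}_0$ directly from these stabilized values. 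You instead perform the same reduction modulo $2\pi$ (using integrality of the $r_{j,k}$ in exactly the same way) but then invoke sequential compactness of the countable product $[0,2\pi]^{\sharp G_{\Lambda}}$, extract a coordinatewise convergent subsequence by a diagonal argument, and pass to the limit using the finite support of each $\mathbf{r}_j$. Your route is more robust: the paper's uniqueness/stabilization claim is delicate (it is not literally true when some $a_j$ vanish, or when a coordinate $x_{n,k}$ is constrained only through several exponents jointly rather than pinned down by a single congruence), whereas the compactness argument requires no such claim and only uses that each level-$n$ condition involves finitely many coordinates. The price is the appeal to Tychonoff-type compactness and metrizability of the countable product, which the paper's more hands-on construction avoids; both correctly isolate integrality of the basis as the reason the family $(\mathbf{x}_n)_n$ can be confined to a fixed bounded box.
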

\begin{proof}
For each integer value $n\geq 1$, let $V_n$ be the $\mathbb{Q}$-vector space generated by $\{\lambda_1,\ldots,\lambda_n\}$, $V$ the $\mathbb{Q}$-vector space generated by $\Lambda$, and
$G_{\Lambda}=\{g_1, g_2,\ldots, g_k,\ldots\}$ a basis of $V$.
If $A_1\shortstack{$_{{\fontsize{6}{7}\selectfont *}}$\\$\sim$} A_2$, by Definition \ref{DefEquiv00} for each integer value $n\geq 1$, with $n\leq\sharp\Lambda$, there exists a $\mathbb{Q}$-linear map $\psi_n:V_n\to\mathbb{R}$ such that
$b_j=a_je^{i\psi_n(\lambda_j)},\ j=1,2\ldots,n.$
Hence
$b_j=a_je^{i\sum_{k=1}^{i_j}r_{j,k}\psi_n(g_k)},\ j=1,2\ldots,n$
or, equivalently,
$b_j=a_j e^{i<\mathbf{r}_j,\mathbf{x}_n>},\ j=1,2\ldots,n,$
with $\mathbf{x}_n:=(\psi_n(g_1),\psi_n(g_2),\ldots)$. Conversely, suppose the existence, for each integer value $n\geq 1$, of a vector of the form $\mathbf{x}_n=(x_{n,1},x_{n,2},\ldots,x_{n,k},\ldots)\in \mathbb{R}^{\sharp G_{\Lambda}}$
such that $b_j=a_j e^{<\mathbf{r}_j,\mathbf{x}_n>i}$, $j=1,2\ldots,n$. Thus a $\mathbb{Q}$-linear map $\psi_n:V_n\to\mathbb{R}$ can be defined from $\psi_n(g_k):=x_{n,k}$, $k\geq 1$. Therefore $\psi_n(\lambda_j)=\sum_{k=1}^{i_j}r_{j,k}\psi(g_k)=$ $<\mathbf{r}_j,\mathbf{x}_n>,\ j=1,2\ldots,n,$
and the result follows.

Now, suppose that $G_{\Lambda}$ is an integral basis for $\Lambda$ and $A_1\shortstack{$_{{\fontsize{6}{7}\selectfont *}}$\\$\sim$} A_2$.
Thus, by above, for each fixed integer value $n\geq 1$, let $\mathbf{x}_n=(x_{n,1},x_{n,2},\ldots)\in\mathbb{R}^{\sharp G_{\Lambda}}$ be a vector such that
$b_j=a_j e^{i<\mathbf{r}_j,\mathbf{x}_n>},\ j=1,2\ldots,n.$ Since each component of $\mathbf{r}_j$ is an integer number, without loss of generality, we can take $\mathbf{x}_n\in[0,2\pi)^{\sharp G_{\Lambda}}$ as the unique vector in $[0,2\pi)^{\sharp G_{\Lambda}}$ satisfying the above equalities, where we assume $x_{n,k}=0$ for any $k$ such that $r_{j,k}=0$ for $j=1,\ldots,n$. Therefore, under this assumption, if $m>n$ then $x_{m,k}=x_{n,k}$ for any $k$ so that $x_{n,k}\neq 0$. In this way, we can construct a vector $\mathbf{x}_0=(x_{0,1},x_{0,2},\ldots,x_{0,k},\ldots)\in [0,2\pi)^{\sharp G_{\Lambda}}$ such that $b_j=a_j e^{<\mathbf{r}_j,\mathbf{x}_0>i}$ for every $j\geq 1$. Indeed, if $r_{1,k}\neq 0$ then the component $x_{0,k}$ is chosen as $x_{1,k}$, and if $r_{1,k}=0$ then
each component $x_{0,k}$ is defined as $x_{n+1,k}$ where $r_{j,k}=0$ for $j=1,\ldots,n$ and $r_{n+1,k}\neq 0$.
%
Conversely, if there exists $\mathbf{x}_0=(x_{0,1},x_{0,2},\ldots,x_{0,k},\ldots)\in \mathbb{R}^{\sharp G_{\Lambda}}$
such that $b_j=a_j e^{<\mathbf{r}_j,\mathbf{x}_0>i}$ for every $j\geq 1$, then it is clear that $A_1\shortstack{$_{{\fontsize{6}{7}\selectfont *}}$\\$\sim$} A_2$ under Definition \ref{DefEquiv00}.
\end{proof}

On the other hand, we will say that $G_{\Lambda}$ is the \textit{natural basis} for $\Lambda$, and we will denote it as $G_{\Lambda}^*$, when it is constituted by elements in $\Lambda$. That is, firstly if $\lambda_1\neq 0$ then $g_1:=\lambda_1\in G_{\Lambda}^*$. Secondly, if $\{\lambda_1,\lambda_2\}$ are $\mathbb{Q}$-rationally independent, then $g_2:=\lambda_2\in G_{\Lambda}^*$. Otherwise, if $\{\lambda_1,\lambda_3\}$ are $\mathbb{Q}$-rationally independent, then $g_2:=\lambda_3\in G_{\Lambda}^*$, and so on. In this way, 
if $\lambda_j\in G_{\Lambda}^*$ then $r_{j,m_j}=1$ and $r_{j,k}=0$ for $k\neq m_j$, where $m_j$ is such that $g_{m_j}=\lambda_j$. In fact, each element in $G_{\Lambda}^*$ is of the form $g_{m_j}$ for $j$ such that $\lambda_j$ is $\mathbb{Q}$-linear independent of the previous elements in the basis. 
Furthermore, if $\lambda_j\notin G_{\Lambda}^*$ then $\lambda_j=\sum_{k=1}^{i_j}r_{j,k}g_k$, where $\{g_{1},g_{2},\ldots,g_{i_j}\}\subset \{\lambda_1,\lambda_2,\ldots,\lambda_{j-1}\}$.

In terms of the natural basis, we next prove another characterization which will be used later.

\begin{proposition}
Given $\Lambda=\{\lambda_1,\lambda_2,\ldots,\lambda_j,\ldots\}$ a set of exponents, consider $A_1(p)$ and $A_2(p)$ two exponential sums in the class $\mathcal{S}_{\Lambda}$, say
$A_1(p)=\sum_{j\geq1}a_je^{\lambda_jp}$ and $A_2(p)=\sum_{j\geq1}b_je^{\lambda_jp}.$
Fixed the natural basis $G_{\Lambda}^*=\{g_1,g_2,\ldots,g_k,\ldots\}$ for $\Lambda$, for each $j\geq1$ let $\mathbf{r}_j\in \mathbb{R}^{\sharp G_{\Lambda}^*}$ be the vector of rational components verifying (\ref{errej}).  
Then $A_1\shortstack{$_{{\fontsize{6}{7}\selectfont *}}$\\$\sim$} A_2$ 
if and only if there exists $\mathbf{x}_0=(x_{0,1},x_{0,2},\ldots,x_{0,k},\ldots)\in[0,2\pi)^{\sharp G_{\Lambda}^*}$
such that for each $j=1,2,\ldots$ it is satisfied $b_j=a_j e^{<\mathbf{r}_j,\mathbf{x}_0+\mathbf{p}_j>i}$ for some $\mathbf{p}_j=(2\pi n_{j,1},2\pi n_{j,2},\ldots)\in \mathbb{R}^{\sharp G_{\Lambda}^*}$, with $n_{j,k}\in\mathbb{Z}$.
\end{proposition}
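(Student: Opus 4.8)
The plan is to deduce this characterization directly from Proposition \ref{DefEquiv}, exploiting the special shape of the vectors $\mathbf{r}_j$ when $G_{\Lambda}^*$ is the natural basis. Recall that for the natural basis, whenever $\lambda_j\in G_{\Lambda}^*$ the associated vector $\mathbf{r}_j$ is a standard unit vector (so its only nonzero component equals $1$, hence is an integer), and whenever $\lambda_j\notin G_{\Lambda}^*$ the vector $\mathbf{r}_j$ has rational components indexed only by basis elements $g_k$ that already appeared as earlier exponents. The natural basis need not be integral, so we cannot invoke the second part of Proposition \ref{DefEquiv} verbatim; the point of the present statement is to recover a single global vector $\mathbf{x}_0$ at the cost of allowing the $2\pi$-integer shifts $\mathbf{p}_j$.

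First I would prove the ``only if'' direction. Assuming $A_1\shortstack{$_{{\fontsize{6}{7}\selectfont *}}$\\$\sim$} A_2$, Proposition \ref{DefEquiv} gives, for each $n$, a vector $\mathbf{x}_n\in\mathbb{R}^{\sharp G_{\Lambda}^*}$ with $b_j=a_je^{<\mathbf{r}_j,\mathbf{x}_n>i}$ for $j=1,\dots,n$. The key observation is that the components of $\mathbf{x}_n$ that actually matter for indices $j\le n$ are the ones indexed by basis elements $g_k$ occurring in the representations of $\lambda_1,\dots,\lambda_n$; and by the natural-basis structure these basis elements are themselves among $\lambda_1,\dots,\lambda_n$. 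For such an index $k$, say $g_k=\lambda_{j_0}$ with $j_0\le n$, the relation for $j=j_0$ reads $b_{j_0}=a_{j_0}e^{x_{n,k}i}$ (since $\mathbf{r}_{j_0}$ is the $k$-th unit vector), which determines $e^{x_{n,k}i}$ independently of $n$; hence we may replace $x_{n,k}$ by the unique representative in $[0,2\pi)$ without affecting any of the equalities, and this representative does not depend on $n$. Defining $x_{0,k}$ to be this common value (and $x_{0,k}:=0$ for basis indices $k$ that never occur), I obtain a single vector $\mathbf{x}_0\in[0,2\pi)^{\sharp G_{\Lambda}^*}$. Then for arbitrary $j$, choosing any $n\ge j$ large enough that all basis elements appearing in $\lambda_j$ have index $\le n$, we get $<\mathbf{r}_j,\mathbf{x}_0>\,\equiv\,<\mathbf{r}_j,\mathbf{x}_n>\pmod{2\pi}$ because $\mathbf{x}_0$ and $\mathbf{x}_n$ agree, modulo $2\pi$, on every coordinate in the support of $\mathbf{r}_j$; writing the difference as $<\mathbf{r}_j,\mathbf{p}_j>$ with $\mathbf{p}_j$ a vector of $2\pi$-integer multiples yields $b_j=a_je^{<\mathbf{r}_j,\mathbf{x}_0+\mathbf{p}_j>i}$, as required.

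The ``if'' direction is the easy one: if such an $\mathbf{x}_0$ and shifts $\mathbf{p}_j$ exist, then for each $n$ set $\mathbf{x}_n:=\mathbf{x}_0+\mathbf{p}_n^{(n)}$ for a suitable single shift working for all $j\le n$ — more simply, observe that $b_j=a_je^{<\mathbf{r}_j,\mathbf{x}_0>i}$ modulo $2\pi$ in the exponent, so the vector $\mathbf{x}_0$ itself already satisfies, for every $n$ and every $j\le n$, the congruence $<\mathbf{r}_j,\mathbf{x}_0>\equiv$ (a value giving $b_j/a_j$) $\pmod{2\pi}$, because the $2\pi$-integer correction $<\mathbf{r}_j,\mathbf{p}_j>$ does not change $e^{(\cdot)i}$. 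Hence $b_j=a_je^{<\mathbf{r}_j,\mathbf{x}_0>i}$ for all $j$, and Proposition \ref{DefEquiv} (its first, non-integral part, taking $\mathbf{x}_n=\mathbf{x}_0$ for every $n$) gives $A_1\shortstack{$_{{\fontsize{6}{7}\selectfont *}}$\\$\sim$} A_2$.

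The main obstacle is the bookkeeping in the ``only if'' direction: one must argue carefully that the natural-basis property forces every basis coordinate relevant to $\lambda_j$ to be ``pinned down'' by a unit-vector relation coming from an earlier (or equal) index, so that the per-$n$ vectors $\mathbf{x}_n$ can be glued into a coherent $\mathbf{x}_0$ after reducing mod $2\pi$. This is exactly the mechanism already used in the proof of the integral-basis case of Proposition \ref{DefEquiv}, the difference being that here ``integrality'' is only available on the coordinates that are themselves exponents; the remaining (rational, possibly non-integer) coordinates of $\mathbf{r}_j$ never force a further reduction, they simply get absorbed into the $\mathbf{p}_j$ term. I expect the write-up to essentially mirror the second paragraph of the proof of Proposition \ref{DefEquiv}, with the indices traced through the natural-basis description given just above the statement.
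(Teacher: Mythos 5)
Your ``only if'' direction is essentially the paper's own argument and is sound: at each basis exponent $g_k=\lambda_{j_0}$ the unit-vector relation $b_{j_0}=a_{j_0}e^{ix_{n,k}}$ pins $x_{n,k}$ down modulo $2\pi$ independently of $n$, the reduced values are glued into $\mathbf{x}_0\in[0,2\pi)^{\sharp G_{\Lambda}^*}$, and for a general $j$ the coordinatewise discrepancies $x_{n,k}-x_{0,k}\in 2\pi\mathbb{Z}$ on the support of $\mathbf{r}_j$ are collected into $\mathbf{p}_j$. (Your intermediate claim that $<\mathbf{r}_j,\mathbf{x}_0>\;\equiv\;<\mathbf{r}_j,\mathbf{x}_n>\pmod{2\pi}$ is false, because the rational weights $r_{j,k}$ do not preserve congruences mod $2\pi$; but you never use it --- the exact identity $<\mathbf{r}_j,\mathbf{x}_n>=<\mathbf{r}_j,\mathbf{x}_0+\mathbf{p}_j>$ is all that is needed, and you do state that.)

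The ``if'' direction, however, contains a genuine gap, and it is the same misconception made load-bearing. You assert that the correction $<\mathbf{r}_j,\mathbf{p}_j>=2\pi\sum_k r_{j,k}n_{j,k}$ ``does not change $e^{(\cdot)i}$'', i.e.\ that it lies in $2\pi\mathbb{Z}$. Since $r_{j,k}$ is only rational this fails: for $r_{j,k}=\tfrac12$ and $n_{j,k}=1$ the correction is $\pi$ and flips the sign of the exponential. Hence you cannot deduce $b_j=a_je^{<\mathbf{r}_j,\mathbf{x}_0>i}$ for all $j$, and taking $\mathbf{x}_n=\mathbf{x}_0$ does not verify the hypothesis of Proposition \ref{DefEquiv}. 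Note that if your reasoning were valid the vectors $\mathbf{p}_j$ would be entirely redundant and the proposition would collapse to the integral-basis statement, which defeats the purpose of introducing them. The paper argues this direction differently, building $\mathbf{x}_n=\mathbf{x}_0+\mathbf{m}_n$ with $m_{n,k}=2\pi p_{1,k}\cdots p_{n,k}q_{n,k}$ so that all denominators $q_{j,k}$, $j\le n$, are cleared. Be aware that this implication is genuinely delicate and cannot be waved through: for $\Lambda=\{1,\tfrac12,\tfrac14\}$, $G_{\Lambda}^*=\{1\}$, $a_1=a_2=a_3=1$, $\mathbf{x}_0=(0)$, $\mathbf{p}_1=\mathbf{p}_3=(0)$, $\mathbf{p}_2=(2\pi)$, one gets $(b_1,b_2,b_3)=(1,-1,1)$, and no single real $y$ satisfies $e^{iy}=1$, $e^{iy/2}=-1$, $e^{iy/4}=1$ simultaneously (the first two force $y/2\pi$ to be an odd integer, the third forces it to be divisible by $4$). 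So arbitrary families $\{\mathbf{p}_j\}$ cannot be absorbed into a single witness; any correct treatment of this direction must exploit some compatibility among the $\mathbf{p}_j$ (such as the one produced by the ``only if'' construction), and your proposal supplies none.
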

\begin{proof}
Suppose that $A_1\shortstack{$_{{\fontsize{6}{7}\selectfont *}}$\\$\sim$} A_2$. Consider $I=\{1,2,\ldots,k,\ldots: \lambda_k\in G_{\Lambda}^*\}$ and $I_n=\{1,2,\ldots,k,\ldots,n: \lambda_k\in G_{\Lambda}^*\}$.
Let $j\in I$, then $r_{j,m_j}=1$ and $r_{j,k}=0$ for $k\neq m_j$, where $m_j$ is such that $g_{m_j}=\lambda_j$.
Thus, by Proposition \ref{DefEquiv}, let $\mathbf{x}_j=(x_{j,1},x_{j,2},\ldots)\in\mathbb{R}^{\sharp G_{\Lambda}^*}$ be a vector such that
$$b_j=a_j e^{i<\mathbf{r}_j,\mathbf{x}_j>}=a_j e^{i\sum_{k=1}^{i_j}r_{j,k}x_{j,k}}=a_je^{ir_{j,m_j}x_{j,m_j}}=a_je^{ix_{j,m_j}}.$$
Define $\mathbf{x}_0=(x_{0,1},x_{0,2},\ldots)\in\mathbb{R}^{\sharp G_{\Lambda}^*}=\mathbb{R}^{\sharp I}$ as $x_{0,m_j}:=x_{j,m_j}$ for $j\in I$. Thus, by taking $\mathbf{p}_j=(0,0,\ldots)$, the result trivially holds for those $j$'s such that $\lambda_j\in G_{\Lambda}^*$, i.e. for $j\in I$. Now, let $j$ be such that $\lambda_j\notin G_{\Lambda}^*$, i.e. $j\notin I$. By Proposition \ref{DefEquiv}, let $\mathbf{x}_j=(x_{j,1},x_{j,2},\ldots)\in\mathbb{R}^{\sharp G_{\Lambda}^*}$ be a vector such that
$$b_p=a_p e^{i<\mathbf{r}_p,\mathbf{x}_j>}=a_p e^{i\sum_{k=1}^{i_j}r_{p,k}x_{j,k}},\ p=1,2,\ldots,j.$$
Note that if $p=1,2,\ldots,j$ is such that $\lambda_p\in G_{\Lambda}^*$, then
$$b_p=a_pe^{ir_{p,m_p}x_{j,m_p}},$$
which necessarily implies that $r_{p,m_p}x_{j,m_p}=r_{p,m_p}x_{p,m_p}+2\pi n_p$, i.e. $x_{j,m_p}=x_{p,m_p}+2\pi n_{j,p}$ for some $n_{j,p}\in\mathbb{Z}$. 
Hence
$$b_j=a_j e^{i<\mathbf{r}_j,\mathbf{x}_j>}=a_j e^{i\sum_{k=1}^{i_j}r_{j,k}x_{j,k}}=a_j e^{i\sum_{p\in I_{j-1}}r_{j,m_p}x_{j,m_p}}=$$
$$a_j e^{i\sum_{p\in I_{j-1}}r_{j,m_p}(x_{p,m_p}+2\pi n_{j,p})}=a_j e^{i<\mathbf{r}_j,\mathbf{x}_0+\mathbf{p}_j>},$$
where $\mathbf{p}_j=(2\pi n_{j,1},2\pi n_{j,2},\ldots,0,0,\ldots)$. Moreover, by changing conveniently the vectors $\mathbf{p}_j$, we can take $\mathbf{x}_0\in [0,2\pi)^{\sharp G_{\Lambda}^*}$ without loss of generality.

Conversely, suppose the existence of $\mathbf{x}_0=(x_{0,1},x_{0,2},\ldots,x_{0,k},\ldots)\in\mathbb{R}^{\sharp G_{\Lambda}^*}$ satisfying $b_j=a_j e^{<\mathbf{r}_j,\mathbf{x}_0+\mathbf{p}_j>i}$ for some $\mathbf{p}_j=(2\pi n_{j,1},2\pi n_{j,2},\ldots)\in \mathbb{R}^{\sharp G_{\Lambda}^*}$, with $n_{j,k}\in\mathbb{Z}$. Let $r_{j,k}=\frac{p_{j,k}}{q_{j,k}}$ with $p_{j,k}$ and $q_{j,k}$ coprime integer numbers, and define $q_{n,k}:=\operatorname{lcm}(q_{1,k},q_{2,k},\ldots,q_{n,k})$ for each $k=1,2,\ldots$.
Thus, for any integer number $n\geq 1$, take $\mathbf{x}_n=\mathbf{x}_0+\mathbf{m}_n$, where  $m_{n,k}=2\pi p_{1,k}p_{2,k}\cdots p_{n,k}q_{n,k}$, $k=1,2,\ldots$.
Therefore,
it is satisfied $b_j=a_j e^{<\mathbf{r}_j,\mathbf{x}_n>i}$ for each $j=1,2,\ldots,n$, which implies that $A_1\shortstack{$_{{\fontsize{6}{7}\selectfont *}}$\\$\sim$} A_2$.
\end{proof}

As corollary, we can formulate the following result.
\begin{corollary}\label{use}
Given $\Lambda=\{\lambda_1,\lambda_2,\ldots,\lambda_j,\ldots\}$ a set of exponents, consider $A_1(p)$ and $A_2(p)$ two exponential sums in the class $\mathcal{S}_{\Lambda}$, say
$A_1(p)=\sum_{j\geq1}a_je^{\lambda_jp}$ and $A_2(p)=\sum_{j\geq1}b_je^{\lambda_jp}.$
Fixed a basis $G_{\Lambda}=\{g_1,g_2,\ldots,g_k,\ldots\}$ for $\Lambda$, for each $j\geq1$ let $\mathbf{r}_j\in \mathbb{R}^{\sharp G_{\Lambda}}$ be the vector of rational components verifying (\ref{errej}).  
Then $A_1\shortstack{$_{{\fontsize{6}{7}\selectfont *}}$\\$\sim$} A_2$ 
if and only if there exists $\mathbf{x}_0=(x_{0,1},x_{0,2},\ldots,x_{0,k},\ldots)\in[0,2\pi)^{\sharp G_{\Lambda}}$
such that for each $j=1,2,\ldots$ it is satisfied $b_j=a_j e^{<\mathbf{r}_j,\mathbf{x}_0+\mathbf{q}_j>i}$ for some $\mathbf{q}_j\in \mathbb{R}^{\sharp G_{\Lambda}}$ which are of the form
$\mathbf{q}_j=T\cdot \mathbf{p}_j^t$, where $T$ is the change of basis matrix, with respect to the natural basis, and $\mathbf{p}_j$ is of the form $(2\pi n_{j,1},2\pi n_{j,2},\ldots,2\pi n_{j,k},\ldots)$, $n_{j,k}\in\mathbb{Z}$.
\end{corollary}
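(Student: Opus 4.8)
The plan is to obtain the statement as a direct corollary of the foregoing proposition, which is the same assertion written for the \emph{natural} basis $G_\Lambda^*$, by transporting the data through the linear change of coordinates given by $T$. Write $\mathbf{r}_j^*$ for the (finitely supported, rational) coordinate vector of $\lambda_j$ with respect to $G_\Lambda^*$, keep $\mathbf{r}_j$ for its coordinate vector with respect to the given basis $G_\Lambda$, and let $T$ be the change of basis matrix expressing the elements of $G_\Lambda$ in terms of $G_\Lambda^*$. Since $G_\Lambda$ and $G_\Lambda^*$ are both $\mathbb{Q}$-bases of $V=\langle\Lambda\rangle_{\mathbb{Q}}$ and every $\lambda_j$ involves only finitely many basis vectors, $T$ is a rational infinite matrix whose relevant finite blocks are genuine invertible matrices, so all products below make sense coordinatewise. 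The only algebraic fact I would use is that $\langle\mathbf{r}_j,\mathbf{x}\rangle$ is just the value $\psi(\lambda_j)$ of the $\mathbb{Q}$-linear functional $\psi$ on $V$ whose $G_\Lambda$-coordinates are $\mathbf{x}$, so it is independent of the chosen basis; concretely $\mathbf{r}_j=(T^{t})^{-1}\mathbf{r}_j^*$, whence
\[
\langle\mathbf{r}_j,\,T\mathbf{v}\rangle=\langle\mathbf{r}_j^*,\,\mathbf{v}\rangle\qquad\text{for every }\mathbf{v}.
\]

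For the direct implication, suppose $A_1$ and $A_2$ are equivalent. By the foregoing proposition there exist $\mathbf{y}_0\in[0,2\pi)^{\sharp G_\Lambda^*}$ and vectors $\mathbf{p}_j$ with entries in $2\pi\mathbb{Z}$ with $b_j=a_j e^{i\langle\mathbf{r}_j^*,\mathbf{y}_0+\mathbf{p}_j\rangle}$ for all $j$. Applying the displayed identity with $\mathbf{v}=\mathbf{y}_0+\mathbf{p}_j$ turns this into $b_j=a_j e^{i\langle\mathbf{r}_j,\,T\mathbf{y}_0+T\mathbf{p}_j\rangle}$, so the pair $(T\mathbf{y}_0,\{T\mathbf{p}_j\})$ already realizes the required representation, except that $T\mathbf{y}_0$ need not lie in $[0,2\pi)^{\sharp G_\Lambda}$. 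I would then take $\mathbf{x}_0$ to be the representative of $T\mathbf{y}_0$ modulo $2\pi$ in $[0,2\pi)^{\sharp G_\Lambda}$ and, imitating in the $G_\Lambda$-coordinates the ``changing conveniently the vectors $\mathbf{p}_j$'' step of the previous proof, absorb the shift $T\mathbf{y}_0-\mathbf{x}_0$ into the corrections so as to end with admissible vectors $\mathbf{q}_j=T\mathbf{p}_j^{t}$, with $\mathbf{p}_j$ of the prescribed integral shape, satisfying $b_j=a_j e^{i\langle\mathbf{r}_j,\mathbf{x}_0+\mathbf{q}_j\rangle}$.

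For the converse, I would run the same computation backwards. Given $\mathbf{x}_0\in[0,2\pi)^{\sharp G_\Lambda}$ and $\mathbf{q}_j=T\mathbf{p}_j^{t}$ of the stated form with $b_j=a_j e^{i\langle\mathbf{r}_j,\mathbf{x}_0+\mathbf{q}_j\rangle}$, put $\mathbf{y}_0:=T^{-1}\mathbf{x}_0$; the displayed identity gives $b_j=a_j e^{i\langle\mathbf{r}_j^*,\mathbf{y}_0+\mathbf{p}_j\rangle}$, and here reducing $\mathbf{y}_0$ into $[0,2\pi)^{\sharp G_\Lambda^*}$ is harmless, since in the natural basis the resulting lattice shift is an integer multiple of $2\pi$ in each coordinate and can simply be added to the integral vectors $\mathbf{p}_j$; the foregoing proposition then gives that $A_1$ and $A_2$ are equivalent. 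Alternatively one can avoid the foregoing proposition and verify equivalence directly from Proposition \ref{DefEquiv}, producing for each $n$ a vector $\mathbf{x}_n$ with $b_j=a_j e^{i\langle\mathbf{r}_j,\mathbf{x}_n\rangle}$ for $j\le n$ by a least common multiple argument like the one in the converse part of that proof.

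The step I expect to be the real obstacle is the normalization inside the direct implication: keeping $\mathbf{x}_0$ in $[0,2\pi)^{\sharp G_\Lambda}$ while every correction term stays \emph{literally} of the form $T\mathbf{p}_j^{t}$ with integer entries $n_{j,k}$. Because $T$ is only rational and not unimodular, a translation of $T\mathbf{y}_0$ by an element of $2\pi\mathbb{Z}^{\sharp G_\Lambda}$ need not correspond to a translation of $\mathbf{p}_j$ by an element of $2\pi\mathbb{Z}^{\sharp G_\Lambda^*}$, so one must repeat the careful choice of representatives and of the vectors $\mathbf{p}_j$ from the preceding proposition, now in the $G_\Lambda$-coordinates and tracking how the standard lattice interacts with $T$. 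The countable dimensionality is not a difficulty: each $\lambda_j$, hence each $\mathbf{r}_j$ and $\mathbf{r}_j^*$, has finite support, so the entire argument lives in finite-dimensional blocks.
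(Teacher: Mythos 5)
Your route is the intended one: the paper gives no argument for this corollary beyond ``as corollary, we can formulate the following result,'' i.e.\ it is meant to follow from the preceding proposition (the natural-basis case) by exactly the change-of-coordinates computation you set up, and your key identity $\langle\mathbf{r}_j,T\mathbf{v}\rangle=\langle\mathbf{r}_j^*,\mathbf{v}\rangle$ (equivalently, basis-independence of $\psi(\lambda_j)$) together with the finite support of each $\mathbf{r}_j$ is the whole content. Your converse is complete as written, since the converse half of Proposition \ref{DefEquiv} (and of the preceding proposition) only needs $\mathbf{x}_0\in\mathbb{R}^{\sharp G_{\Lambda}}$, so reducing $\mathbf{y}_0=T^{-1}\mathbf{x}_0$ into $[0,2\pi)^{\sharp G_{\Lambda}^*}$ is indeed optional.

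The one point you flag as an obstacle --- keeping $\mathbf{x}_0$ in $[0,2\pi)^{\sharp G_{\Lambda}}$ while every correction stays literally of the form $T\mathbf{p}_j^{t}$ with $\mathbf{p}_j\in(2\pi\mathbb{Z})^{\sharp G_{\Lambda}^*}$ --- is genuinely problematic, and you are right not to pretend it is routine: replacing $T\mathbf{y}_0$ by $T\mathbf{y}_0-2\pi\mathbf{m}$ with $\mathbf{m}\in\mathbb{Z}^{\sharp G_{\Lambda}}$ changes the $j$-th exponent by $2\pi\langle\mathbf{r}_j^*,T^{-1}\mathbf{m}\rangle$, which is $2\pi$ times a rational that need not lie in $\mathbb{Z}+\sum_k r_{j,k}^*\mathbb{Z}$, so it cannot in general be absorbed into an admissible $\mathbf{p}_j$ (e.g.\ $r_{j,1}^*=1/3$ and $(T^{-1}\mathbf{m})_1=1/2$ produce a shift of $2\pi/6$). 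This is a defect of the corollary's over-precise normalization rather than of your argument: the correct and fully provable statement takes $\mathbf{x}_0=T\mathbf{y}_0\in\mathbb{R}^{\sharp G_{\Lambda}}$ with $\mathbf{q}_j=T\mathbf{p}_j^{t}$, which is exactly what your direct implication delivers before the reduction modulo $2\pi$. Since the only consequence the paper draws from Corollary \ref{use} is $|a_j|=|b_j|$ (in the remark following it and in Lemma \ref{defnueva}), and that already follows from the unnormalized form because each exponent is purely imaginary, nothing downstream is affected. I would therefore either state the corollary with $\mathbf{x}_0\in\mathbb{R}^{\sharp G_{\Lambda}}$, or restrict the $[0,2\pi)$ normalization to the natural-basis coordinates as in the preceding proposition.
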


In particular, note that the coefficients of equivalent exponential sums have the same modulus.

\section{The finite exponential sums of the classes $\mathcal{P}_{\mathbb{R},\Lambda}$}\label{section3}

This section is focused on the following classes of finite exponential sums.

\begin{definition}
Let $\Lambda=\{\lambda_1,\ldots,\lambda_n\}$ be a set of $n\geq 1$ distinct real numbers. We will say that a function  $f:\mathbb{R}\mapsto\mathbb{C}$ is in the class $\mathcal{P}_{\mathbb{R},\Lambda}$ if it is of the form
\begin{equation}\label{eqq0new2}
f(t)=a_1e^{i\lambda_1t}+\ldots+a_ne^{i\lambda_nt},\ a_j\in\mathbb{C},\ \lambda_j\in\Lambda,\ j=1,\ldots,n.
\end{equation}
\end{definition}

The functions $f(t)$ of type (\ref{eqq0new2}) are also called trigonometric polynomials.

Note that Definition \ref{DefEquiv00} can be particularized to the classes $\mathcal{P}_{\mathbb{R},\Lambda}$. Furthermore, if $\Lambda$ is finite it is clear that it is always possible to find an integral basis for $\Lambda$.
In this context, we next prove the following important result.

\begin{theorem}\label{prop3}
Given $\Lambda=\{\lambda_1,\lambda_2,\ldots,\lambda_n\}$ a set of $n\geq 1$ exponents, let
$a_1e^{i\lambda_1t}+\ldots+a_ne^{i\lambda_nt} \mbox{ and }b_1e^{i\lambda_1t}+\ldots+b_ne^{i\lambda_nt}$ be
two equivalent functions in the class $\mathcal{P}_{\mathbb{R},\Lambda}$. Fixed $d>0$ and $\varepsilon>0$, there exists $\tau>d$ such that
$$\sum_{j=1}^n|a_je^{i\lambda_j\tau}-b_j|<\varepsilon.$$
\end{theorem}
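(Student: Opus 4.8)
The plan is to extract from Proposition \ref{DefEquiv} an explicit description of the equivalence and then to produce $\tau$ as an ``almost period'' via Kronecker's approximation theorem. Since $\Lambda$ is finite we fix an integral basis $G_{\Lambda}=\{g_1,\ldots,g_m\}$ for $\Lambda$, so that $\lambda_j=\sum_{k=1}^{m}r_{j,k}g_k$ with all $r_{j,k}\in\mathbb{Z}$; write $\mathbf{r}_j=(r_{j,1},\ldots,r_{j,m})$. By the integral-basis assertion of Proposition \ref{DefEquiv}, the hypothesis that the two trigonometric polynomials are equivalent furnishes a vector $\mathbf{x}_0=(x_{0,1},\ldots,x_{0,m})\in\mathbb{R}^{m}$ with $b_j=a_j e^{i\langle\mathbf{r}_j,\mathbf{x}_0\rangle}$ for every $j=1,\ldots,n$. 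Therefore, for any $\tau$,
$$\sum_{j=1}^{n}\bigl|a_j e^{i\lambda_j\tau}-b_j\bigr|=\sum_{j=1}^{n}|a_j|\,\bigl|e^{i(\lambda_j\tau-\langle\mathbf{r}_j,\mathbf{x}_0\rangle)}-1\bigr|,$$
so the task is to find $\tau>d$ making all the angles $\lambda_j\tau-\langle\mathbf{r}_j,\mathbf{x}_0\rangle$ simultaneously close to $0$ modulo $2\pi$.

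The key reduction is that $\lambda_j\tau-\langle\mathbf{r}_j,\mathbf{x}_0\rangle=\sum_{k=1}^{m}r_{j,k}\,(g_k\tau-x_{0,k})$ has \emph{integer} coefficients $r_{j,k}$; hence it suffices to control the $m$ individual quantities $g_k\tau-x_{0,k}$. Concretely, setting $R:=\max_{j}\sum_{k}|r_{j,k}|$, if $\operatorname{dist}(g_k\tau-x_{0,k},2\pi\mathbb{Z})<\delta$ for $k=1,\ldots,m$ then $\operatorname{dist}(\lambda_j\tau-\langle\mathbf{r}_j,\mathbf{x}_0\rangle,2\pi\mathbb{Z})<R\delta$ for each $j$, and, using $|e^{i\theta}-1|\le\operatorname{dist}(\theta,2\pi\mathbb{Z})$, the displayed sum is then at most $R\delta\sum_{j}|a_j|$. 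Choosing $\delta$ small enough in terms of $\varepsilon$, $R$ and $\sum_j|a_j|$ reduces the theorem to: there exists $\tau>d$ with $\operatorname{dist}(g_k\tau-x_{0,k},2\pi\mathbb{Z})<\delta$ for all $k=1,\ldots,m$. Since $G_{\Lambda}$ is a $\mathbb{Q}$-vector space basis, $g_1,\ldots,g_m$ are linearly independent over $\mathbb{Q}$, so this last statement is exactly Kronecker's approximation theorem for $g_1,\ldots,g_m$, in the form asserting that a simultaneous approximant can be found beyond any prescribed bound $d$. (The degenerate situation in which all $\lambda_j=0$, i.e. $m=0$, is trivial since then $b_j=a_j$; so we may assume $R\ge1$ and $\sum_j|a_j|>0$ when choosing $\delta$.)

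I expect the only substantive point to be this last invocation of Kronecker's theorem on the half-line $\tau>d$ rather than on all of $\mathbb{R}$: plain density of the curve $\tau\mapsto(g_1\tau,\ldots,g_m\tau)$ in the torus $(\mathbb{R}/2\pi\mathbb{Z})^{m}$ only yields some real $\tau$, and to force $\tau>d$ one invokes that the corresponding linear flow with $\mathbb{Q}$-independent frequencies is minimal (equivalently, equidistributed by Weyl's theorem), so that each of its forward semi-orbits is already dense. Everything else is bookkeeping once the parametrisation $b_j=a_j e^{i\langle\mathbf{r}_j,\mathbf{x}_0\rangle}$ supplied by Proposition \ref{DefEquiv} is available.
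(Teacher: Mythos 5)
Your proposal is correct and follows essentially the same route as the paper: obtain the phase vector $\mathbf{x}_0$ with $b_j=a_je^{i\langle\mathbf{r}_j,\mathbf{x}_0\rangle}$ from Proposition \ref{DefEquiv}, then use Kronecker's theorem on the rationally independent basis elements to find $\tau>d$ aligning all phases simultaneously. The only (immaterial) difference is that you pass to an integral basis at the outset, whereas the paper keeps rational coefficients and clears denominators by the factor $q=\operatorname{lcm}(q_{j,k})$ inside the Kronecker application; the $\tau>d$ issue you flag is already built into the version of Kronecker's theorem the paper cites from Hardy--Wright.
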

\begin{proof}
Let $G_{\Lambda}=\{g_1,\ldots, g_m\}$, for a certain $m\geq1$, be linearly independent over the rationals so that each $\lambda_j\in\Lambda$ is expressible as a linear combination of its terms, say
\begin{equation}\label{u}
\lambda_j=\sum_{k=1}^{m}r_{j,k}g_k,\ \mbox{for some }r_{j,k}=\frac{p_{j,k}}{q_{j,k}}\in\mathbb{Q},\ j=1,2,\ldots,n.
\end{equation}
Consider $\varepsilon>0$, $q:=\operatorname{lcm}(q_{j,k}: j=1,\ldots,n,k=1,\ldots,m)$, $r:=\max\{|r_{j,k}|: j=1,\ldots,n,k=1,\ldots,m\}>0$ and $a:=\max\{|a_j|:j=1,2,\ldots,n\}>0$.
 Since $a_1e^{i\lambda_1t}+\ldots+a_ne^{i\lambda_nt}$ and $b_1e^{i\lambda_1t}+\ldots+b_ne^{i\lambda_nt}$ are equivalent, Proposition \ref{DefEquiv} assures the existence of a vector of real numbers $\mathbf{x}_0=(x_{0,1},x_{0,2},\ldots,x_{0,m})$ such that
\begin{equation}\label{un}
b_j=a_j e^{<\mathbf{r}_j,\mathbf{x}_0>i}=a_je^{i\sum_{k=1}^{m}r_{j,k}x_{0,k}},\ j=1,2,\ldots,n.
\end{equation}
Now, as the numbers $
c_{k}=\frac{g_k}{2\pi q},\text{ }k=1,2,\ldots,m,
$ are rationally independent,
we next apply Kronecker's theorem
\cite[p.382]{Hardy} with the following choice:
$c_k$, $\varepsilon_1=\frac{\varepsilon}{a\cdot m\cdot n\cdot r\cdot E}>0$ and
$
d_{k}=\frac{x_{0,k}}{2\pi q},\text{
}k=1,2,\ldots,m.
$
In this manner we assure the existence of a real number $\tau>d>0$ and integer numbers
$e_{1},e_{2},\ldots,e_{m}$ such that
\[
\left\vert \tau c_k-e_k-d_k\right\vert=\left|\frac{\tau g_k}{2\pi q}-e_k-\frac{x_{0,k}}{2\pi q}\right| <\varepsilon_1,
\]
that is
\begin{equation}\label{un2}
\tau g_k=2\pi qe_k+x_{0,k}+\eta_k, \mbox{with  }|\eta_k|<\varepsilon_1.
\end{equation}
Therefore, from (\ref{u}) and (\ref{un}), with $t\in \mathbb{R}$, we have
$$\sum_{j=1}^n|a_je^{i\lambda_j\tau}-b_j|=\sum_{j=1}^n\left|a_je^{i\lambda_j\tau}- a_je^{i\sum_{k=1}^{m}r_{j,k}x_{0,k}}\right|\leq$$
$$\sum_{j=1}^n|a_j|\left|e^{i\tau\lambda_j}-e^{i\sum_{k=1}^{m}r_{j,k}x_{0,k}}\right|\leq a\sum_{j=1}^n \left|e^{i\tau\lambda_j}-e^{i\sum_{k=1}^{m}r_{j,k}x_{0,k}}\right|=$$
$$a \sum_{j=1}^n\left|e^{i\tau\sum_{k=1}^{m} r_{j,k}g_k}-e^{i\sum_{k=1}^{m}r_{j,k}x_{0,k}}\right|,$$
which, from (\ref{un2}), is equal to
$$a \sum_{j=1}^n\left|e^{i\sum_{k=1}^{m} (r_{j,k}2\pi qe_k+r_{j,k}x_{0,k}+r_{j,k}\eta_k)}-e^{i\sum_{k=1}^{m}r_{j,k}x_{0,k}}\right|=$$
$$a \sum_{j=1}^n\left|e^{i\sum_{k=1}^{m}r_{j,k}\eta_k}-1\right|\leq a \sum_{j=1}^n\left|\sum_{k=1}^{m}r_{j,k}\eta_k\right|\leq$$
$$anr \sum_{k=1}^{m}\left|\eta_k\right|<anr \sum_{k=1}^{m}\frac{\varepsilon}{a\cdot m\cdot n\cdot r}=\varepsilon.$$
\end{proof}

As an immediate consequence of Theorem \ref{prop3}, we obtain the following corollary (compare with \cite[Corollary 3]{SV}). 

\begin{corollary}\label{corol3}
Given $\Lambda=\{\lambda_1,\lambda_2,\ldots,\lambda_n\}$ a finite set of exponents, let
$f_1(t)=\sum_{j=1}^na_je^{i\lambda_j t}$ and $f_2(t)=\sum_{j=1}^nb_je^{i\lambda_j t}$
be two equivalent functions in the class $\mathcal{P}_{\mathbb{R},\Lambda}$. Fixed $\varepsilon>0$, there exists a relatively dense set of real numbers $\tau$ such that
$$|f_1(t+\tau)-f_2(t)|<\varepsilon\ \ \forall t\in\mathbb{R}.$$
\end{corollary}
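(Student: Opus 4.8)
The plan is to deduce this from Theorem~\ref{prop3} together with the classical fact that a Bohr almost periodic function has a relatively dense set of $\varepsilon$-translation numbers. The bridge between the two is the elementary observation that controlling the coefficientwise distance $\sum_{j=1}^n|a_je^{i\lambda_j\tau}-b_j|$ controls $f_1(\cdot+\tau)-f_2$ uniformly: since $f_1(t+\tau)=\sum_{j=1}^n a_je^{i\lambda_j\tau}e^{i\lambda_j t}$, one has
$$|f_1(t+\tau)-f_2(t)|=\Bigl|\sum_{j=1}^n\bigl(a_je^{i\lambda_j\tau}-b_j\bigr)e^{i\lambda_j t}\Bigr|\le\sum_{j=1}^n\bigl|a_je^{i\lambda_j\tau}-b_j\bigr|\qquad(t\in\mathbb{R}).$$

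First I would apply Theorem~\ref{prop3} with, say, $d=1$ and with $\varepsilon/2$ in place of $\varepsilon$, obtaining a real number $\tau_0$ for which the right-hand side above is $<\varepsilon/2$; hence $\sup_{t\in\mathbb{R}}|f_1(t+\tau_0)-f_2(t)|<\varepsilon/2$. Next, since $f_1$ is a trigonometric polynomial it belongs to $AP(\mathbb{R},\mathbb{C})$, so the set $S=\{s\in\mathbb{R}:\sup_{t\in\mathbb{R}}|f_1(t+s)-f_1(t)|\le\varepsilon/2\}$ is relatively dense. For any $s\in S$, translation invariance of the supremum norm and the triangle inequality give $\sup_{t\in\mathbb{R}}|f_1(t+\tau_0+s)-f_2(t)|\le\sup_{t\in\mathbb{R}}|f_1(t+s)-f_1(t)|+\sup_{t\in\mathbb{R}}|f_1(t+\tau_0)-f_2(t)|<\varepsilon/2+\varepsilon/2=\varepsilon$. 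Thus every element of $\tau_0+S$ is a translation number doing what the statement requires, and $\tau_0+S$ is relatively dense, being a translate of the relatively dense set $S$.

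I do not expect a genuine obstacle here: the analytic heart of the matter is already contained in Theorem~\ref{prop3}, whose proof runs through Kronecker's theorem. The one point requiring care is that Theorem~\ref{prop3} only guarantees admissible $\tau$ beyond any prescribed threshold, i.e.\ nonemptiness of the relevant set rather than relative density; the step that upgrades this to relative density is the translation-by-$\tau_0$ argument just described (equivalently, one may observe that $g(\tau):=\sum_{j=1}^n|a_je^{i\lambda_j\tau}-b_j|$ is itself Bohr almost periodic and use the relative density of its own set of $(\varepsilon/2)$-translation numbers). Keeping the two $\varepsilon/2$ contributions consistent — which is why Theorem~\ref{prop3} is invoked with $\varepsilon/2$ — is the only bookkeeping needed.
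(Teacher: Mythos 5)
Your proposal is correct and follows essentially the same route as the paper: the coefficientwise bound $|f_1(t+\tau)-f_2(t)|\le\sum_{j=1}^n|a_je^{i\lambda_j\tau}-b_j|$, an application of Theorem~\ref{prop3} with $\varepsilon/2$ to produce one good $\tau_0$, and then the relative density of the $(\varepsilon/2)$-translation numbers of the almost periodic function $f_1$ combined with the triangle inequality to upgrade to a relatively dense set. No gaps.
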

\begin{proof}Fixed $\tau>0$, note that for any $t\in\mathbb{R}$ it is accomplished that
$$|f_1(t+\tau)-f_2(t)|\leq \sum_{j=1}^n|a_je^{i\lambda_j(t+\tau)}-b_je^{i\lambda_jt}|=\sum_{j=1}^n|a_je^{i\lambda_j\tau}-b_j|.$$
Thus, by Theorem \ref{prop3} and given $d>0$, there exists $\tau_1>d$ such that
\begin{equation}\label{jhg}
|f_1(t+\tau_1)-f_2(t)|< \varepsilon/2\ \forall t\in\mathbb{R}.
\end{equation}
Moreover, since $f_1(t)$ is almost periodic, there exists a real number $l=l(\varepsilon)$
\ such that every interval of length $l$
contains at
least one translation number $\tau$, associated with $\varepsilon $, satisfying
\begin{equation}\label{bgt}
\left\vert f_1(t+\tau)-f_1(t)\right\vert \leq \varepsilon/2\ \mbox{for all }t\in\mathbb{R}.
 \end{equation}
Consequently, from (\ref{jhg}) and (\ref{bgt}) we deduce the existence of a relatively dense set of real numbers $\tau$ such that any $t\in\mathbb{R}$ satisfies
$$|f_1(t+\tau+\tau_1)-f_2(t)|\leq |f_1(t+\tau_1+\tau)-f_1(t+\tau_1)|+|f_1(t+\tau_1)-f_2(t)|<\varepsilon.$$
This proves the result.
\end{proof}

It was proved in \cite[Proposition 2]{SV} that, with respect to the topology of uniform convergence, the equivalence classes in $\mathcal{P}_{\mathbb{R},\Lambda}/\shortstack{$_{{\fontsize{6}{7}\selectfont *}}$\\$\sim$}$ are sequentially compact. We can analogously prove that this property is also true with respect to the topology of $B^2(\mathbb{R},\mathbb{C})$ (see the proof in \cite[Proposition 2]{SV}).

\begin{proposition}\label{proppp}
Let $\Lambda$ be a finite set of exponents and $\mathcal{G}$ an equivalence class in  $\mathcal{P}_{\mathbb{R},\Lambda}/\shortstack{$_{{\fontsize{6}{7}\selectfont *}}$\\$\sim$}$. Thus $\mathcal{G}$ is sequentially compact.
\end{proposition}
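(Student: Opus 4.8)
The plan is to identify $\mathcal{G}$ with a continuous image of a compact torus, so that its compactness — equivalently, since the ambient topology is here metrizable, its sequential compactness — becomes immediate. Fix once and for all a representative $f_0(t)=\sum_{j=1}^{n}a_je^{i\lambda_j t}$ of $\mathcal{G}$. Because $\Lambda$ is finite, we may pick an \emph{integral} basis $G_{\Lambda}=\{g_1,\dots,g_m\}$ for $\Lambda$, so that $\lambda_j=\sum_{k=1}^{m}r_{j,k}g_k$ with $r_{j,k}\in\mathbb{Z}$; put $\mathbf{r}_j=(r_{j,1},\dots,r_{j,m})$. By the integral-basis part of Proposition~\ref{DefEquiv}, a polynomial $g(t)=\sum_{j=1}^{n}b_je^{i\lambda_j t}$ lies in $\mathcal{G}$ if and only if there is $\mathbf{x}\in\mathbb{R}^{m}$ — equivalently, since the $\mathbf{r}_j$ are integral, $\mathbf{x}\in[0,2\pi)^{m}$ — with $b_j=a_je^{i\langle\mathbf{r}_j,\mathbf{x}\rangle}$ for $j=1,\dots,n$; in particular $|b_j|=|a_j|$ for every element of $\mathcal{G}$, as noted after Corollary~\ref{use}.

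Second, I would record that on $\mathcal{P}_{\mathbb{R},\Lambda}$ the $B^2$ seminorm is nothing but the Euclidean norm of the coefficient vector. Indeed, since $\lambda_1,\dots,\lambda_n$ are pairwise distinct one has $M(e^{i\lambda_j t}e^{-i\lambda_k t})=\delta_{jk}$, so Parseval's equality yields, for any $g(t)=\sum_j b_je^{i\lambda_j t}$ and $h(t)=\sum_j c_je^{i\lambda_j t}$,
$$M\Bigl(\bigl|\,\sum\nolimits_{j} b_j e^{i\lambda_j t}-\sum\nolimits_{j} c_j e^{i\lambda_j t}\,\bigr|^{2}\Bigr)=\sum_{j=1}^{n}|b_j-c_j|^{2}.$$
Hence convergence of a sequence in $\mathcal{P}_{\mathbb{R},\Lambda}$ with respect to $B^2$ is exactly coordinatewise convergence of the coefficient vectors in $\mathbb{C}^{n}$, and this seminorm is in fact a norm there.

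Now the argument itself. Given a sequence $(g_\nu)\subset\mathcal{G}$ with $g_\nu(t)=\sum_j b_j^{(\nu)}e^{i\lambda_j t}$, write $b_j^{(\nu)}=a_je^{i\langle\mathbf{r}_j,\mathbf{x}_\nu\rangle}$ with $\mathbf{x}_\nu$ regarded as elements of the compact cube $[0,2\pi]^{m}$. Passing to a subsequence, $\mathbf{x}_\nu\to\mathbf{x}_0\in[0,2\pi]^{m}$; then $b_j^{(\nu)}\to b_j:=a_je^{i\langle\mathbf{r}_j,\mathbf{x}_0\rangle}$ for each $j$, so by the preceding paragraph $g_\nu\to g$ in $B^2$, where $g(t)=\sum_j b_je^{i\lambda_j t}\in\mathcal{P}_{\mathbb{R},\Lambda}$. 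Finally, the equalities $b_j=a_je^{i\langle\mathbf{r}_j,\mathbf{x}_0\rangle}$ together with Proposition~\ref{DefEquiv} say precisely that $g$ is equivalent to $f_0$, i.e. $g\in\mathcal{G}$. Thus every sequence in $\mathcal{G}$ has a $B^2$-convergent subsequence with limit in $\mathcal{G}$. (Equivalently, and more conceptually: since the $\mathbf{r}_j$ are integral, $\mathbf{x}\mapsto\sum_j a_je^{i\langle\mathbf{r}_j,\mathbf{x}\rangle}e^{i\lambda_j t}$ factors through and defines a continuous surjection from the compact torus $(\mathbb{R}/2\pi\mathbb{Z})^{m}$ onto $\mathcal{G}\subset(\mathcal{P}_{\mathbb{R},\Lambda},\|\cdot\|_{B^2})$, whence $\mathcal{G}$ is compact.)

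The only genuinely delicate step is verifying that the $B^2$-limit $g$ stays in $\mathcal{G}$, and this is exactly where finiteness of $\Lambda$ — through the existence of an integral basis — is essential: it turns the parameter space into a compact cube, so a convergent subsequence of the parameters $\mathbf{x}_\nu$ is available and yields the required $\mathbf{x}_0$. Apart from this, the proof is the verbatim analogue of \cite[Proposition~2]{SV}, with the uniform norm replaced by the $B^2$ norm; the replacement is harmless because on the fixed finite-dimensional space $\mathcal{P}_{\mathbb{R},\Lambda}$ both topologies reduce to convergence of coefficient vectors.
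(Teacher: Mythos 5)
Your proof is correct and follows exactly the route the paper intends: the paper omits the argument and merely refers to \cite[Proposition~2]{SV}, whose integral-basis/compact-cube mechanism you reproduce, and your added observation that via $M(e^{i\lambda_j t}e^{-i\lambda_k t})=\delta_{jk}$ the $B^2$ seminorm on $\mathcal{P}_{\mathbb{R},\Lambda}$ is just the $\ell^2$ norm of the coefficient vector is precisely the point that makes the adaptation from uniform convergence to the $B^2$ topology ``analogous.'' No gaps.
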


\section{Besicovitch almost periodic functions in terms of an equivalence relation}

For our purposes, we next focus our attention on the Besicovitch space $B(\mathbb{R},\mathbb{C})$, whose functions are obtained by the completion of the trigonometric polynomials with respect to the seminorm
$\limsup_{l\to\infty}\left(\frac{ 1}{ 2l}\int_{-l}^{l}|f(t)|dt\right)$ (see for example \cite[Section 3.4]{Corduneanu}). In particular, the space of functions $B(\mathbb{R},\mathbb{C})$ contains those of the space of the almost periodic functions $AP(\mathbb{R},\mathbb{C})$ and those functions of $B^2(\mathbb{R},\mathbb{C})$. We recall that every function in $B(\mathbb{R},\mathbb{C})$ is associated with a real exponential sum with real frequencies of the form $\sum_{j\geq 1}a_je^{i\lambda_jt}$, which is called its Fourier series. 


\begin{definition}\label{DF}
Let $\Lambda=\{\lambda_1,\lambda_2,\ldots,\lambda_j,\ldots\}$ be an arbitrary countable set of distinct real numbers. We will say that a function $f:\mathbb{R}\to\mathbb{C}$ is in the class $\mathcal{F}_{B^2,\Lambda}$ if it is an almost periodic function in $B^2(\mathbb{R},\mathbb{C})$ whose associated Fourier series is of the form
 \begin{equation}\label{eqq00o}
\sum_{j\geq 1}a_je^{i\lambda_jt},\ a_j\in\mathbb{C},\ \lambda_j\in\Lambda.
\end{equation}
\end{definition}


It is worth noting that, in general, when we write that a function $f$ is in $B(\mathbb{R},\mathbb{C})$ we do not have in mind the function $f$ itself, it does represent a whole class of equivalent functions according to the relation $f_1\simeq f_2$ if and only if $$\limsup_{l\to\infty}\left(\frac{ 1}{ 2l}\int_{-l}^{l}|f(t)-g(t)|^2dt\right)=0.$$


In terms of Definition \ref{DefEquiv00}, we can define an equivalence relation on the functions in $B(\mathbb{R},\mathbb{C})$, in particular on the classes $\mathcal{F}_{B^2,\Lambda}$. 
More specifically, we establish the following definition. 

\begin{definition}\label{DefEquiv2}
Given $\Lambda=\{\lambda_1,\lambda_2,\ldots,\lambda_j,\ldots\}$ a set of exponents, let $f_1$ and $f_2$ denote two  equivalence classes of $B(\mathbb{R},\mathbb{C})/\simeq$  whose associated Fourier series are given by
 \begin{equation*}\label{eqq00}
\sum_{j\geq 1}a_je^{i\lambda_jt}\ \mbox{and}\ \sum_{j\geq 1}b_je^{i\lambda_jt},\ a_j,b_j\in\mathbb{C},\ \lambda_j\in\Lambda.
\end{equation*}
We will say that $f_1$ is equivalent to $f_2$ if for each integer value $n\geq 1$
there exists a $\mathbb{Q}$-linear map $\psi_n:V_n\to\mathbb{R}$, where $V_n$ is the $\mathbb{Q}$-vector space generated by $\{\lambda_1,\lambda_2,\ldots,\lambda_n\}$, such that
$$b_j=a_je^{i\psi_n(\lambda_j)},\ j=1,\ldots,n.$$
In that case, we will write $f_1\shortstack{$_{{\fontsize{6}{7}\selectfont *}}$\\$\sim$} f_2$.
\end{definition}




The next important lemma allows us to prove that if a function $f_2$ is equivalent (in the sense of Definition \ref{DefEquiv2}) to a function $f_1$ belonging to the space $B^2(\mathbb{R},\mathbb{C})$, then $f_2$ also belongs to $B^2(\mathbb{R},\mathbb{C})$. This is clearly a consequence of Riesz-Fischer theorem \cite[p. 110]{Besi}.

\begin{lemma}\label{defnueva}
Let $f_1(t)\in B^2(\mathbb{R},\mathbb{C})$ be an almost periodic function whose Fourier series is given by $\sum_{j\geq 1}a_je^{i\lambda_jt},\ a_j\in\mathbb{C}$, where $\{\lambda_1,\ldots,\lambda_j,\ldots\}$ is a set of distinct exponents. Consider $b_j\in\mathbb{C}$ such that $\sum_{j\geq 1}b_je^{i\lambda_jt}$ and $\sum_{j\geq 1}a_je^{i\lambda_jt}$ are equivalent. Then $\sum_{j\geq 1}b_je^{i\lambda_jt}$ is the Fourier series associated with an almost periodic function $f_2(t)\in B^2(\mathbb{R},\mathbb{C})$ so that $f_1\shortstack{$_{{\fontsize{6}{7}\selectfont *}}$\\$\sim$} f_2$.
\end{lemma}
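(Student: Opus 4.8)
The plan is to reduce everything to Parseval's equality together with the Riesz--Fischer theorem for $B^2(\mathbb{R},\mathbb{C})$, using the elementary observation (recorded just after Corollary \ref{use}) that the coefficients of equivalent exponential sums have the same modulus. Concretely, since $\sum_{j\geq1}a_je^{i\lambda_jt}$ and $\sum_{j\geq1}b_je^{i\lambda_jt}$ are equivalent, Definition \ref{DefEquiv00} (applied to each initial block $j=1,\dots,n$) gives $b_j=a_je^{i\psi_n(\lambda_j)}$, whence $|b_j|=|a_j|$ for every $j\geq1$.

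Next I would invoke Parseval's equality for $f_1\in B^2(\mathbb{R},\mathbb{C})$, namely $\sum_{j\geq1}|a_j|^2=M(|f_1(t)|^2)<\infty$, which is cited in the introduction from \cite[p.~109]{Besi}. Combining this with the modulus identity $|b_j|=|a_j|$ yields
\[
\sum_{j\geq1}|b_j|^2=\sum_{j\geq1}|a_j|^2<\infty .
\]
Because the exponents $\{\lambda_1,\lambda_2,\ldots\}$ are assumed distinct, the formal series $\sum_{j\geq1}b_je^{i\lambda_jt}$ is then an admissible trigonometric series with square-summable coefficients, so the analogue of the Riesz--Fischer theorem for $B^2(\mathbb{R},\mathbb{C})$ (cited from \cite[p.~110]{Besi}) produces an almost periodic function $f_2(t)\in B^2(\mathbb{R},\mathbb{C})$ whose Fourier series is precisely $\sum_{j\geq1}b_je^{i\lambda_jt}$.

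Finally, to conclude $f_1\shortstack{$_{{\fontsize{6}{7}\selectfont *}}$\\$\sim$} f_2$ I would simply note that the Fourier series of $f_1$ and $f_2$ are, by construction, the series $\sum_{j\geq1}a_je^{i\lambda_jt}$ and $\sum_{j\geq1}b_je^{i\lambda_jt}$ which were assumed equivalent in the sense of Definition \ref{DefEquiv00}; hence the $\mathbb{Q}$-linear maps $\psi_n$ witnessing that equivalence also witness $f_1\shortstack{$_{{\fontsize{6}{7}\selectfont *}}$\\$\sim$} f_2$ in the sense of Definition \ref{DefEquiv2}. There is no serious obstacle here: the only point requiring a moment's care is the passage $|b_j|=|a_j|$, which must be stated for \emph{all} $j$ (not just within a fixed block) by letting $n\to\infty$ in Definition \ref{DefEquiv00}; everything else is a direct citation of Parseval and Riesz--Fischer.
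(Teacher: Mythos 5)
Your proposal is correct and follows essentially the same route as the paper's proof: establish $|b_j|=|a_j|$ from the equivalence of the exponential sums, invoke Parseval's equality for $f_1$ to get $\sum_{j\geq1}|b_j|^2=\sum_{j\geq1}|a_j|^2<\infty$, and apply the Riesz--Fischer theorem to produce $f_2\in B^2(\mathbb{R},\mathbb{C})$ with the prescribed Fourier series. The only cosmetic difference is that you read off $|b_j|=|a_j|$ directly from Definition \ref{DefEquiv00} while the paper cites Corollary \ref{use}; both are fine, and your explicit closing remark that the same maps $\psi_n$ witness $f_1\shortstack{$_{{\fontsize{6}{7}\selectfont *}}$\\$\sim$} f_2$ makes the last step, which the paper leaves implicit, cleaner.
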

\begin{proof}
Take $\Lambda=\{\lambda_1,\ldots,\lambda_j,\ldots\}$. By the hypothesis, $f_1\in\mathcal{F}_{B^2,\Lambda}\subset B^2(\mathbb{R},\mathbb{C})$ is determined by the series
$\sum_{j\geq 1}a_je^{i\lambda_jt},\ a_j\in\mathbb{C},\ \lambda_j\in\Lambda.$ Moreover,
since $\sum_{j\geq 1}a_je^{i\lambda_jt}\shortstack{$_{{\fontsize{6}{7}\selectfont *}}$\\$\sim$} \sum_{j\geq 1}b_je^{i\lambda_jt}$, we deduce from Corollary \ref{use}
that $|b_j|=|a_j|$ for $j\geq 1$ and hence
$$\sum_{j\geq 1}|b_j|^2=\sum_{j\geq 1}|a_j|^2<\infty.$$
By Riesz-Fischer theorem \cite[p. 110]{Besi}, there exists a function $f_2\in B^2(\mathbb{R},\mathbb{C})$ such that the values $b_n$ are the Fourier coefficients of $f_2$.
\end{proof}

As it was said before, it is worth noting that a Fourier series $\sum_{n\geq 1}a_ne^{i\lambda_nt}$, such that $\sum_{n\geq 1}|a_n|^2<\infty$, represents an equivalence class (according to the relation $\simeq$) of functions in $B^2(\mathbb{R},\mathbb{C})$ (not a single function).
In fact, as we pointed out in introduction, since two almost periodic functions in the Besicovitch sense are connected in $B^2(\mathbb{R},\mathbb{C})$ when they have the same Fourier series (\cite[p. 148]{Besi} or \cite[Section 4.2]{Corduneanu}), we immediately deduce from the results above the following corollary.


\begin{corollary}
Let $f_1(t)$ and $f_2(t)$ be two equivalent functions in $B(\mathbb{R},\mathbb{C})$. If $f_1(t)\in B^2(\mathbb{R},\mathbb{C})$, then $f_2(t)\in B^2(\mathbb{R},\mathbb{C})$.
\end{corollary}


 The following result is concerned with the concept of convergence in $B^2(\mathbb{R},\mathbb{C})$ which is certainly weaker than the uniform convergence.
Under this topology, we next show that the equivalence classes of $\mathcal{F}_{B^2,\Lambda}/\shortstack{$_{{\fontsize{6}{7}\selectfont *}}$\\$\sim$}$ are closed. In fact, more specifically, they are sequentially compact. 


\begin{proposition}\label{prop}
Let $\Lambda$ be a set of exponents and $\mathcal{G}$ an equivalence class in $\mathcal{F}_{B^2,\Lambda}/\shortstack{$_{{\fontsize{6}{7}\selectfont *}}$\\$\sim$}$. Thus $\mathcal{G}$ is sequentially compact.
\end{proposition}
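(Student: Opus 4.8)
The plan is to transfer the statement, through the correspondence between a function of $B^2(\mathbb{R},\mathbb{C})$ and its sequence of Fourier coefficients, to a statement about a closed subset of a compact subset of $\ell^2$, and then to deduce the ``closed'' part from the finite case already available in Proposition \ref{proppp}.

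First I would fix a representative $f\in\mathcal{G}$ with Fourier series $\sum_{j\geq1}a_je^{i\lambda_jt}$; by Parseval's equality $\sum_{j\geq1}|a_j|^2<\infty$. By Corollary \ref{use} (and the observation made just after it) every $g\in\mathcal{G}$ has a Fourier series $\sum_{j\geq1}b_je^{i\lambda_jt}$ with $|b_j|=|a_j|$ for all $j$, so the assignment $g\mapsto(b_j)_{j\geq1}$ is a bijection of $\mathcal{G}$ onto a subset $K$ of $T:=\prod_{j\geq1}\{z\in\mathbb{C}:|z|=|a_j|\}$. Applying Parseval's equality to $g_1-g_2$ shows that this bijection is an isometry from $\mathcal{G}$, endowed with the $B^2$-metric $d(g_1,g_2)=\sqrt{M(|g_1-g_2|^2)}$, onto $K$ equipped with the $\ell^2$-metric. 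Since $\sum_j|a_j|^2<\infty$, the set $T$ is a compact subset of $\ell^2$: it is the image of the compact product of unit circles under the map $(w_j)_j\mapsto(|a_j|w_j)_j$, which is continuous from the (metrizable) product topology into $\ell^2$ by dominated convergence, the terms $|a_j|^2|w_j^{(m)}-w_j|^2$ being bounded by the summable quantities $4|a_j|^2$. Hence it is enough to prove that $K$ is closed in $\ell^2$; then $K$, being a closed subset of the sequentially compact set $T$, is itself sequentially compact, and so is $\mathcal{G}$, and unravelling the isometry this says exactly that every sequence in $\mathcal{G}$ has a subsequence which $B^2$-converges to a member of $\mathcal{G}$.

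To prove that $K$ is closed, I would take $(b^{(m)})_m\subseteq K$ with $b^{(m)}\to c=(c_j)_j$ in $\ell^2$, so that in particular $b^{(m)}_j\to c_j$ for each fixed $j$. Fix an integer $n\geq1$. For each $m$ there is, by the definition of $K$, a $\mathbb{Q}$-linear map $\psi_n^{(m)}:V_n\to\mathbb{R}$ with $b^{(m)}_j=a_je^{i\psi_n^{(m)}(\lambda_j)}$ for $j=1,\ldots,n$; hence (the restrictions of $\psi_n^{(m)}$ to the subspaces $V_k$ with $k<n$ supplying the maps required at the lower levels) the truncated trigonometric polynomials $\sum_{j=1}^nb^{(m)}_je^{i\lambda_jt}$ all lie in the single equivalence class $\mathcal{G}_n$ of $\sum_{j=1}^na_je^{i\lambda_jt}$ in $\mathcal{P}_{\mathbb{R},\{\lambda_1,\ldots,\lambda_n\}}$ modulo the equivalence relation of Definition \ref{DefEquiv00}. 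By Proposition \ref{proppp}, $\mathcal{G}_n$ is sequentially compact for the $B^2$-topology, which on the finite-dimensional class $\mathcal{P}_{\mathbb{R},\{\lambda_1,\ldots,\lambda_n\}}$ is nothing but convergence of the coefficient vectors in $\mathbb{C}^n$; thus $\mathcal{G}_n$ is closed in $\mathbb{C}^n$. Since $(b^{(m)}_1,\ldots,b^{(m)}_n)\to(c_1,\ldots,c_n)$, the limit belongs to $\mathcal{G}_n$; that is, there is a $\mathbb{Q}$-linear map $\psi_n:V_n\to\mathbb{R}$ with $c_j=a_je^{i\psi_n(\lambda_j)}$ for $j=1,\ldots,n$. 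As $n\geq1$ was arbitrary, Definition \ref{DefEquiv2} yields that $\sum_{j\geq1}c_je^{i\lambda_jt}$ is equivalent to $\sum_{j\geq1}a_je^{i\lambda_jt}$. Finally, $\sum_j|c_j|^2=\sum_j|a_j|^2<\infty$, so by Lemma \ref{defnueva} (i.e. by the Riesz--Fischer theorem) $\sum_{j\geq1}c_je^{i\lambda_jt}$ is the Fourier series of a function $g\in\mathcal{F}_{B^2,\Lambda}$, which then belongs to $\mathcal{G}$. Thus $c\in K$, and $K$ is closed.

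The main obstacle is conceptual rather than technical: one must pass from ``equivalence at each finite level $n$'' to ``equivalence'' in the sense of Definition \ref{DefEquiv2}. This causes no trouble precisely because Definition \ref{DefEquiv2} only requires, for each $n$, the existence of some $\mathbb{Q}$-linear map $\psi_n$ on $V_n$, with no compatibility imposed between the $\psi_n$ for different $n$; that is exactly what lets the level-by-level truncation argument, fed by the finite case of Proposition \ref{proppp}, go through. The remaining points --- compactness of $\prod_{j\geq1}\{z\in\mathbb{C}:|z|=|a_j|\}$ inside $\ell^2$, and the identification via Parseval of $B^2$-convergence with $\ell^2$-convergence of the Fourier coefficients --- are routine.
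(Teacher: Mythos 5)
Your proof is correct, but it follows a genuinely different route from the paper's. The paper proves sequential compactness directly by a diagonal argument on the Bochner--Fej\'er polynomials $P_{l,k}$ of the members of the sequence: at each level $k$ it invokes Proposition \ref{proppp} to extract a nested convergent subsequence, assembles the limiting coefficients into a series, shows that series is equivalent to the Fourier series of $f_1$, and realizes it as a function $h\in\mathcal{G}$ via Lemma \ref{defnueva}; the final passage from convergence of the polynomials $P_k$ to $B^2$-convergence of a subsequence of the original $f_l$ is the delicate step there. You instead use Parseval to identify $\mathcal{G}$ isometrically with a subset $K$ of $\ell^2$ contained in the ``torus'' $T=\prod_{j\geq1}\{|z|=|a_j|\}$, observe that $T$ is compact in $\ell^2$ (continuous image, by dominated convergence, of a compact metrizable product), and reduce everything to showing $K$ is closed --- which you do by the same finite-truncation use of Proposition \ref{proppp} plus Lemma \ref{defnueva} that powers the paper's argument. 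What your approach buys is that the subsequence extraction comes for free from compactness of $T$, so the only real content is the closedness of the coefficient set, and the convergence of a subsequence of the original sequence is immediate from the isometry rather than needing to be transferred back from the Bochner--Fej\'er approximants; what the paper's approach buys is that it stays entirely within the function-theoretic framework and its approximation machinery. Both arguments rest on the same two pillars (Proposition \ref{proppp} at each finite level, and Riesz--Fischer via Lemma \ref{defnueva} to realize the limit series as an element of $\mathcal{G}$), and your observation that Definition \ref{DefEquiv2} imposes no compatibility between the maps $\psi_n$ at different levels is precisely the point that makes the level-by-level argument work in either version.
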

\begin{proof}
Let $\{f_l\}_{l\geq 1}$ be a sequence in an equivalence class $\mathcal{G}$ in $\mathcal{F}_{B^2,\Lambda}/\shortstack{$_{{\fontsize{6}{7}\selectfont *}}$\\$\sim$}$.
For each $l=1,2,\ldots$, suppose that the Fourier series which is associated with  $f_l(t)$ is given by
$$\sum_{j\geq 1}a_{l,j}e^{i\lambda_jt}\ \mbox{with }a_{l,j}\in\mathbb{C},\ \lambda_j\in\Lambda.$$
Fixed a basis $G_{\Lambda}=\{g_1,g_2,\ldots,g_k,\ldots\}$ for $\Lambda$, let $\mathbf{r}_j=(r_{j,1},r_{j,2},\ldots)$ be the vector satisfying $<\mathbf{r}_j,\mathbf{g}>=\lambda_j$ for each $j\geq 1$, where $\mathbf{g}=(g_1,g_2,\ldots,g_k,\ldots)$. 
Since $f_1\shortstack{$_{{\fontsize{6}{7}\selectfont *}}$\\$\sim$} f_l$ for each $l=1,2,\ldots$, we deduce from Proposition \ref{DefEquiv} that for each integer value $n\geq 1$ there exists  $\mathbf{x}_{l,n}=(x_{l,n,1},x_{l,n,2},\ldots)\in\mathbb{R}^{\sharp G_\Lambda}$ such that
\begin{equation}\label{seqaddpipipi1}
a_{l,j}=a_{1,j}e^{i<\mathbf{r}_j,\mathbf{x}_{l,n}>},\ j=1,2\ldots,n.
\end{equation}
Given $l\geq 1$, let $P_{l,k}(t)=\sum_{j\geq 1}p_{j,k}a_{l,j}e^{i\lambda_jt}$, $k=1,2,\ldots$, be the Bochner-Fej\'{e}r polynomials which converge to $f_l$ with respect to the topology of $B^2(\mathbb{R},\mathbb{C})$ (and converge formally to its Fourier series on $\mathbb{R}$) \cite[p. 105, Theorem II]{Besi}. It is worth noting that for each $k$ only a finite number of the factors $p_{j,k}$ differ from zero, and these factors $p_{j,k}$ do not depend on $l$ \cite[p. 48]{Besi}. Thus, by taking into account (\ref{seqaddpipipi1}), it is clear that $\{P_{l,1}(t)\}_{l\geq 1}$ is a sequence of equivalent trigonometric polynomials and, by Proposition \ref{proppp},
there exists a subsequence $\{P_{l_{m,1},1}(t)\}_{m\geq 1}\subset \{P_{l,1}(t)\}_{l\geq 1}$ convergent to a certain $P_1(t)=\sum_{j\geq 1}p_{j,1}a_{j}e^{i\lambda_jt}\in \mathcal{P}_{\mathbb{R},\Lambda_1}$, where $\Lambda_1=\{\lambda_j\in\Lambda:p_{j,1}\neq 0\}$, which is in the same equivalence class as $P_{1,1}(t)$.
Furthermore, by Proposition \ref{DefEquiv}, this means that there exists  $\mathbf{x}_0^{(1)}=(x_{0,1}^{(1)},x_{0,2}^{(1)},\ldots)\in\mathbb{R}^{m_1}$ such that
\begin{equation*}
p_{j,1}a_{j}=p_{j,1}a_{1,j}e^{i<\mathbf{r}_j,\mathbf{x}_0^{(1)}>},\ j=1,2\ldots,\mbox{ with }\lambda_j\in\Lambda_1,
\end{equation*}
where $m_1$ is the number of elements of any basis for $\Lambda_1$.
Equivalently
\begin{equation*}
a_{j}=a_{1,j}e^{i<\mathbf{r}_j,\mathbf{x}_0^{(1)}>},\ j=1,2\ldots,\mbox{ with }\lambda_j\in\Lambda_1.
\end{equation*}
Analogously, from the sequence $\{P_{l_{m,1},2}(t)\}_{m\geq 1}$, we can draw a subsequence $\{P_{l_{m,2},2}(t)\}_{m\geq 1}\subset \{P_{l_{m,1},2}(t)\}_{m\geq 1}$ convergent to a certain $$P_2(t)=\sum_{j\geq 1}p_{j,2}a_{j}e^{i\lambda_jt}\in \mathcal{P}_{\mathbb{R},\Lambda_2},$$ where $\Lambda_2=\{\lambda_j\in\Lambda:p_{j,2}\neq 0\}\cup\Lambda_1$, which is in the same equivalence class as $P_{1,2}(t)$. This implies that there exists $\mathbf{x}_0^{(2)}=(x_{0,1}^{(2)},x_{0,2}^{(2)},\ldots)\in\mathbb{R}^{m_2}$ such that
\begin{equation*}
a_{j}=a_{1,j}e^{i<\mathbf{r}_j,\mathbf{x}_0^{(2)}>},\ j=1,2\ldots,\mbox{ with }\lambda_j\in\Lambda_2,
\end{equation*}
where $m_2$ is the number of elements of any basis for $\Lambda_2$.
In general, for each $k=2,3,\ldots$, we can extract a subsequence $\{P_{l_{m,k},k}(t)\}_{m\geq 1}\subset \{P_{l_{m,k-1},k}(t)\}_{m\geq 1}$ convergent to a certain $$P_k(t)=\sum_{j\geq 1}p_{j,k}a_{j}e^{i\lambda_jt}\in \mathcal{P}_{\mathbb{R},\Lambda_k},$$ where $\Lambda_k=\{\lambda_j\in\Lambda:p_{j,k}\neq 0\}\cup\Lambda_{k-1}$, which is in the same equivalence class as $P_{1,k}(t)$ and hence  there exists  $\mathbf{x}_0^{(k)}=(x_{0,1}^{(k)},x_{0,2}^{(k)},\ldots)\in\mathbb{R}^{m_k}$ ($m_k$ is the number of elements of any basis for $\Lambda_k$) such that
\begin{equation}\label{seqaddpipipipi2}
a_{j}=a_{1,j}e^{i<\mathbf{r}_j,\mathbf{x}_0^{(k)}>},\ j=1,2\ldots,\mbox{ with }\lambda_j\in\Lambda_k.
\end{equation}
So we get by induction a sequence $\{P_k(t)\}_{k\geq 1}$ of trigonometric polynomials which converges formally to the series
\begin{equation}\label{bo}
\sum_{j\geq 1}a_{j}e^{i\lambda_jt},\ \lambda_j\in\Lambda,
\end{equation}
and, since (\ref{seqaddpipipipi2}) is satisfied for any $k=1,2,\ldots$, we can  construct, for each integer value $n\geq 1$, a vector $\mathbf{x}_{0,n}\in\mathbb{R}^{\sharp G_{\Lambda}}$ such that
\begin{equation*}
a_{j}=a_{1,j}e^{i<\mathbf{r}_j,\mathbf{x}_{0,n}>},\ j=1,2\ldots,n\mbox{ with }\lambda_j\in\Lambda.
\end{equation*}
Hence the series (\ref{bo}) is equivalent to $\sum_{j\geq 1}a_{1,j}e^{i\lambda_jt}$ and, by Lemma \ref{defnueva}, it is the Fourier series associated with an almost periodic function $h(t)\in B^2(\mathbb{R},\mathbb{C})$ such that $h\shortstack{$_{{\fontsize{6}{7}\selectfont *}}$\\$\sim$} f_1$. Consequently, $\{P_k(t)\}_{k\geq 1}$ converges
with respect to the topology of $B^2(\mathbb{R},\mathbb{C})$ to $h(t)\in\mathcal{G}$ and we can extract a subsequence of $\{f_l(t)\}_{l\geq1}$ which also converges in $B^2(\mathbb{R},\mathbb{C})$ to $h(t)$. 
\end{proof}


As a consequence of Proposition \ref{prop}, in the topology of
$B^2(\mathbb{R},\mathbb{C})$, we next show that the family of translates of a function $f\in \mathcal{F}_{B^2,\Lambda}$ is closed on its equivalence class of $\mathcal{F}_{B^2,\Lambda}/\shortstack{$_{{\fontsize{6}{7}\selectfont *}}$\\$\sim$}$.
\begin{corollary}\label{cor5}
Let $\Lambda$ be a set of exponents and $f\in \mathcal{F}_{B^2,\Lambda}$. Thus the limit points of the set of functions $\mathcal{T}_f=\{f_{\tau}(t):=f(t+\tau):\tau\in\mathbb{R}\}$ are functions which are equivalent to $f$.
\end{corollary}
\begin{proof}
Since it is plain that the functions included in
$\mathcal{T}_f=\{f_{\tau}(p):=f(t+\tau):\tau\in\mathbb{R}\}$ are in the same equivalence class of $f$ (see in \cite[Section 4.2]{Corduneanu} the Fourier series of the translates of a function in the Besicovitch spaces), 
the result follows easily from Proposition \ref{prop}.
\end{proof}

Now Corollary \ref{cor5} can be improved with the following result.
Indeed, we next prove that, fixed a function $f\in \mathcal{F}_{B^2,\Lambda}$, the limit points of the set of the translates $\mathcal{T}_f=\{f(t+\tau):\tau\in\mathbb{R}\}$ of $f$ are precisely the almost periodic functions which are equivalent to $f$.

\begin{theorem}\label{mth0}
Let $\Lambda$ be a set of exponents, $\mathcal{G}$ an equivalence class in $\mathcal{F}_{B^2,\Lambda}/\shortstack{$_{{\fontsize{6}{7}\selectfont *}}$\\$\sim$}$ and $f\in \mathcal{G}$. Thus the set of functions $\mathcal{T}_f=\{f_{\tau}(t):=f(t+\tau):\tau\in\mathbb{R}\}$ is dense in $\mathcal{G}$.
\end{theorem}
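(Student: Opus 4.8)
The plan is to combine the sequential compactness of equivalence classes (Proposition \ref{prop}) with the approximation result for finite exponential sums (Theorem \ref{prop3}, or rather its Corollary \ref{corol3}), passing through the Bochner--Fej\'er polynomials as in the proof of Proposition \ref{prop}. Since $\mathcal{T}_f\subset\mathcal{G}$ by Corollary \ref{cor5}, and by Proposition \ref{prop} the class $\mathcal{G}$ is sequentially compact, it suffices to prove that every $h\in\mathcal{G}$ is a limit in $B^2(\mathbb{R},\mathbb{C})$ of a sequence of translates of $f$; equivalently, given $h\in\mathcal{G}$ and $\varepsilon>0$, it suffices to produce $\tau\in\mathbb{R}$ with $\|f_\tau-h\|_{B^2}<\varepsilon$, where $\|\cdot\|_{B^2}$ denotes the seminorm $\left(\limsup_{l\to\infty}(2l)^{-1}\int_{-l}^l|\cdot|^2dt\right)^{1/2}$.

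First I would fix the Fourier series $\sum_{j\geq1}a_je^{i\lambda_jt}$ of $f$ and $\sum_{j\geq1}b_je^{i\lambda_jt}$ of $h$, with $f\shortstack{$_{{\fontsize{6}{7}\selectfont *}}$\\$\sim$}h$, and invoke the Bochner--Fej\'er machinery \cite[p.~105, Theorem II]{Besi}: there are Bochner--Fej\'er polynomials $P_k^f(t)=\sum_j p_{j,k}a_je^{i\lambda_jt}$ converging to $f$ in $B^2(\mathbb{R},\mathbb{C})$, and the \emph{same} kernel factors $p_{j,k}$ yield $P_k^h(t)=\sum_j p_{j,k}b_je^{i\lambda_jt}$ converging to $h$ in $B^2(\mathbb{R},\mathbb{C})$ (the $p_{j,k}$ depend only on $\Lambda$, not on the coefficients \cite[p.~48]{Besi}). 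Given $\varepsilon>0$, choose $k$ so that $\|f-P_k^f\|_{B^2}<\varepsilon/3$ and $\|h-P_k^h\|_{B^2}<\varepsilon/3$. Now $P_k^f$ and $P_k^h$ are finite trigonometric polynomials with the same finite set of exponents $\Lambda_k=\{\lambda_j:p_{j,k}\neq0\}$; since $f\shortstack{$_{{\fontsize{6}{7}\selectfont *}}$\\$\sim$}h$, the truncations of their series are equivalent in $\mathcal{P}_{\mathbb{R},\Lambda_k}$ (the defining $\mathbb{Q}$-linear maps for the finite stage restrict appropriately), so they are equivalent elements of $\mathcal{P}_{\mathbb{R},\Lambda_k}/\shortstack{$_{{\fontsize{6}{7}\selectfont *}}$\\$\sim$}$. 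By Corollary \ref{corol3} there is a relatively dense — in particular nonempty — set of $\tau$ with $|P_k^f(t+\tau)-P_k^h(t)|<\varepsilon/3$ for all $t\in\mathbb{R}$; since the uniform norm dominates $\|\cdot\|_{B^2}$, this gives $\|(P_k^f)_\tau-P_k^h\|_{B^2}<\varepsilon/3$. Finally, because translation is an isometry for the $B^2$-seminorm, $\|f_\tau-(P_k^f)_\tau\|_{B^2}=\|f-P_k^f\|_{B^2}<\varepsilon/3$, and the triangle inequality yields
\[
\|f_\tau-h\|_{B^2}\le\|f_\tau-(P_k^f)_\tau\|_{B^2}+\|(P_k^f)_\tau-P_k^h\|_{B^2}+\|P_k^h-h\|_{B^2}<\varepsilon,
\]
as required.

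The main obstacle I anticipate is the bookkeeping needed to justify that the truncated series of $P_k^f$ and $P_k^h$ are genuinely equivalent as elements of $\mathcal{P}_{\mathbb{R},\Lambda_k}$: one must check that the $\mathbb{Q}$-linear maps $\psi_n$ furnished by $f\shortstack{$_{{\fontsize{6}{7}\selectfont *}}$\\$\sim$}h$ (for $n$ large enough to cover all exponents in $\Lambda_k$) give the coefficient relation $p_{j,k}b_j=p_{j,k}a_je^{i\psi_n(\lambda_j)}$ for all $j$ with $p_{j,k}\neq0$, which is immediate, but one should be careful that $\Lambda_k$ is finite and the restriction of $\psi_n$ to the $\mathbb{Q}$-span of $\Lambda_k$ is still $\mathbb{Q}$-linear into $\mathbb{R}$, so Definition \ref{DefEquiv00} applies verbatim. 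A secondary point is to record the two standard facts used silently: that the $B^2$-seminorm is translation invariant, and that it is dominated by the sup norm on trigonometric polynomials — both follow directly from the definition of the seminorm. With these in hand the argument closes cleanly, and combining density of $\mathcal{T}_f$ in $\mathcal{G}$ with the already-established sequential compactness of $\mathcal{G}$ also re-proves Corollary \ref{cor5} and identifies the limit points of $\mathcal{T}_f$ exactly as the functions equivalent to $f$, which is the refined Bochner-type property announced in the introduction.
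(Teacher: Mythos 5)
Your proof is correct, and it reaches the conclusion by a genuinely different route from the paper's. The paper also treats the case $\sharp\Lambda<\infty$ by applying Corollary \ref{corol3} directly (uniform, hence $B^2$, approximation), but in the infinite case it never passes through the Bochner--Fej\'er polynomials: it applies Theorem \ref{prop3} to the first $n$ exponents with tolerance $\sqrt{\varepsilon_n}$, where $\varepsilon_n=\sum_{j>n}|a_j|^2$ is the Parseval tail, and then evaluates $M(|f_{\tau_n}-h|^2)$ exactly as $\sum_{j\ge 1}|a_je^{i\lambda_j\tau_n}-b_j|^2$ via Parseval's equality, bounding the head by $\varepsilon_n$ and the tail by $4\sum_{j>n}|a_j|^2$ using $|a_j|=|b_j|$, so that $M(|f_{\tau_n}-h|^2)<5\varepsilon_n\to 0$. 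Your argument replaces this head/tail coefficient estimate by a three-term triangle inequality through the Bochner--Fej\'er polynomials $P_k^f$ and $P_k^h$ with shared kernel factors, at the price of the two auxiliary facts you correctly flag (translation invariance of the $B^2$-seminorm and its domination by the sup norm, both routine) and of checking that $P_k^f$ and $P_k^h$ are equivalent finite trigonometric polynomials, which indeed follows by restricting a single $\psi_n$ with $n$ large enough to cover the finitely many nonzero $p_{j,k}$. The paper's computation is more self-contained at this point, since Parseval does all the work on the exact Fourier series; yours recycles the Bochner--Fej\'er machinery already set up for Proposition \ref{prop}, avoids any explicit handling of the coefficient tails, and, via the relatively dense set of $\tau$ in Corollary \ref{corol3}, directly exhibits a relatively dense set of translation numbers achieving the $\varepsilon$-approximation, which anticipates the refinement stated in Corollary \ref{cmth}. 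Both arguments are sound.
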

\begin{proof}
Let $f(t)$ be a function in the class $\mathcal{F}_{B^2,\Lambda}$.
We know by Corollary \ref{cor5} that the limit points of the set of functions $\mathcal{T}_f=\{f_{\tau}(t):=f(t+\tau):\tau\in\mathbb{R}\}$ are functions in $B^2(\mathbb{R},\mathbb{C})$ which are equivalent to $f$. We next demonstrate that
any function $h(t)$ which is equivalent to
$f(t)$ is also a limit point of $\mathcal{T}_f$.
If $\sharp\Lambda<\infty$, given $\varepsilon_n=\frac{1}{n}$, $n\in\mathbb{N}$, Corollary \ref{corol3} assures the existence of an increasing sequence $\{\tau_n\}_{n\geq 1}$ of positive real numbers such that any $n\in\mathbb{N}$ verifies
$$|f(t+\tau_n)-h(t)|^2<\varepsilon_n\ \forall t\in\mathbb{R}.$$
Hence $M(|f_{\tau_n}(t)-h(t)|^2)\to 0$ as $n$ goes to $\infty$ (see (\ref{Meanvalo}) for the definition of the mean value $M(f)$), and the result holds for the case $\sharp\Lambda<\infty$.
Consider $\sharp\Lambda=\infty$ and let $\sum_{j\geq 1}a_je^{i\lambda_j t}$ and $\sum_{j\geq 1}b_je^{i\lambda_j t}$ be the Fourier series of $f\in \mathcal{F}_{B^2,\Lambda}$ and $h\shortstack{$_{{\fontsize{6}{7}\selectfont *}}$\\$\sim$} f$, respectively. Take $\varepsilon_1=\sum_{j>1}|a_j|^2>0$, then  Theorem \ref{prop3} assures the existence of $\tau_1>0$ such that
$$\left|a_1e^{i\lambda_1(t+\tau_1)}-b_1e^{i\lambda_1t}\right|<\sqrt{\varepsilon_1}\ \forall t\in\mathbb{R},$$
which implies
\begin{equation}\label{aprop1}
\left|a_1e^{i\lambda_1\tau_1}-b_1\right|^2<\varepsilon_1.
\end{equation}
Thus, from (\ref{aprop1}) and $|a_j|=|b_j|$ for any $j\geq 1$ (Corollary \ref{use}), we have that
$$\sum_{j\geq 1}|a_je^{i\lambda_1\tau_1}-b_j|^2<\varepsilon_1+\sum_{j>1 }|a_je^{i\lambda_j\tau_1}-b_j|^2\leq \varepsilon_1+\sum_{j>1}(|a_j|+|b_j|)^2=$$$$\varepsilon_1+4\sum_{j>1}|a_j|^2=5\varepsilon_1.$$
Consequently,
$$M(|f_{\tau_1}(t)-h(t)|^2)<5\varepsilon_1.$$
Similarly, take $\varepsilon_2=\sum_{j>2}|a_j|^2>0$, then  Theorem \ref{prop3} assures the existence of $\tau_2>\tau_1$ such that
$$\sum_{j=1}^2\left|a_je^{i\lambda_j(t+\tau_2)}-b_je^{i\lambda_jt}\right|<\sqrt{\varepsilon_2},
$$
which implies
\begin{equation}\label{aprop2}
\left(\sum_{j=1}^2\left|a_je^{i\lambda_j\tau_2}-b_j\right|\right)^2<\varepsilon_2.
\end{equation}
Therefore, from (\ref{aprop2}) and $|a_j|=|b_j|$ for any $j\geq 1$, we have
$$\sum_{j\geq 1}|a_je^{i\lambda_1\tau_2}-b_j|^2=|a_1e^{i\lambda_1\tau_2}-b_1|^2+|a_2e^{i\lambda_1\tau_2}-b_2|^2+\sum_{j>2}|a_je^{i\lambda_j\tau_2}-b_j|^2\leq$$
$$(|a_1e^{i\lambda_1\tau_2}-b_1|+|a_2e^{i\lambda_1\tau_2}-b_2|)^2+\sum_{j>2}|a_je^{i\lambda_j\tau_2}-b_j|^2\leq$$
$$\leq \varepsilon_2+\sum_{j>2}(|a_j|+|b_j|)^2=\varepsilon_2+4\sum_{j>2}|a_j|^2=5\varepsilon_2.$$
Consequently,
$$M(|f_{\tau_2}(t)-h(t)|^2)<5\varepsilon_2.$$
In general, by repeating this process, we can construct an increasing sequence $\{\tau_n\}_{n\geq 1}$ such that each $\tau_n$ satisfies that
$$\sum_{j=1}^n\left|a_je^{i\lambda_j(t+\tau_n)}-b_je^{i\lambda_jt}\right|<\sqrt{\varepsilon_n},
$$
which implies
\begin{equation}\label{apropn}
\left(\sum_{j=1}^n\left|a_je^{i\lambda_j\tau_n}-b_j\right|\right)^2<\varepsilon_n.
\end{equation}
with $\varepsilon_n=\sum_{j>n}|a_j|^2$. Thus, from (\ref{apropn}) we have
$$M(|f_{\tau_n}(t)-h(t)|^2)=\sum_{j\geq 1}|a_je^{i\lambda_1\tau_n}-b_j|^2=$$
$$\sum_{j=1}^n|a_je^{i\lambda_j\tau_n}-b_j|^2+\sum_{j>n}|a_je^{i\lambda_j\tau_n}-b_j|^2\leq$$
$$\left(\sum_{j=1}^n|a_je^{i\lambda_j\tau_n}-b_j|\right)^2+\sum_{j>n}|a_je^{i\lambda_j\tau_n}-b_j|^2\leq$$
$$\leq \varepsilon_n+\sum_{j>n}(|a_j|+|b_j|)^2=\varepsilon_n+4\sum_{j>n}|a_j|^2=5\varepsilon_n.$$
Note that $\sum_{j\geq 1}|a_j|^2<\infty$, then
 $\sum_{j>n}|a_j|^2$ tends to $0$ when $n$ goes to $\infty$. Consequently, the sequence of functions
$\{f(t+\tau_n)\}_{n\geq 1}$ converges in $B^2(\mathbb{R},\mathbb{C})$ to $h(t)$, and the result holds.
\end{proof}

\begin{corollary}\label{cmth}
Let $f\in B^2(\mathbb{R},\mathbb{C})$ and $f_1\shortstack{$_{{\fontsize{6}{7}\selectfont *}}$\\$\sim$} f$.
There exists an increasing\-
unbounded sequence $\{\tau_n\}_{n\geq 1}$
of positive numbers such that the sequence of functions
$\{f(t+\tau_n)\}_{n\geq 1}$ converges in $B^2(\mathbb{R},\mathbb{C})$ to $f_1(t)$. In fact, given $\varepsilon>0$ there exists a satisfactorily uniform set of positive numbers $\tau$ such that
$$M(|f(t+\tau)-f_1(t)|^2)<\varepsilon.$$
\end{corollary}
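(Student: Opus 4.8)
The plan is to aim straight at the sharper ``in fact'' clause and then read the sequence statement off from it (the latter is already essentially Theorem \ref{mth0}). First I would note that, $f$ lying in $B^2(\mathbb{R},\mathbb{C})$, it has an at most countable set $\Lambda$ of Fourier exponents, so $f\in\mathcal{F}_{B^2,\Lambda}$; and since $f_1$ is equivalent to $f$, Lemma \ref{defnueva} shows $f_1\in B^2(\mathbb{R},\mathbb{C})$ and that $f$ and $f_1$ lie in the same equivalence class of $\mathcal{F}_{B^2,\Lambda}$. Writing their Fourier series as $\sum_{j\geq1}a_je^{i\lambda_jt}$ and $\sum_{j\geq1}b_je^{i\lambda_jt}$, I would then record the two facts that do all the work: $|b_j|=|a_j|$ for every $j$ (Corollary \ref{use}), and $\sum_{j\geq1}|a_j|^2=M(|f(t)|^2)<\infty$ by Parseval's equality.

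Now fix $\varepsilon>0$ and pick $N\geq1$ (with $N=\sharp\Lambda$ if $\Lambda$ is finite) such that $\sum_{j>N}|a_j|^2<\varepsilon/8$. The trigonometric polynomials $f^{(N)}(t)=\sum_{j=1}^{N}a_je^{i\lambda_jt}$ and $f_1^{(N)}(t)=\sum_{j=1}^{N}b_je^{i\lambda_jt}$ are equivalent in $\mathcal{P}_{\mathbb{R},\Lambda_N}$, $\Lambda_N=\{\lambda_1,\ldots,\lambda_N\}$, because the restrictions of the $\mathbb{Q}$-linear maps $\psi_n$ ($n\leq N$) of Definition \ref{DefEquiv2} witness it. By Corollary \ref{corol3}, applied with tolerance $\sqrt{\varepsilon/2}$, there is a relatively dense---indeed, as discussed below, satisfactorily uniform---set $E$ of positive real numbers with $|f^{(N)}(t+\tau)-f_1^{(N)}(t)|<\sqrt{\varepsilon/2}$ for all $t\in\mathbb{R}$ and all $\tau\in E$.

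To conclude, I would fix $\tau\in E$ and use that $f(t+\tau)-f_1(t)\in B^2(\mathbb{R},\mathbb{C})$ has Fourier series $\sum_{j\geq1}(a_je^{i\lambda_j\tau}-b_j)e^{i\lambda_jt}$, so that Parseval's equality gives
\begin{equation*}
M(|f(t+\tau)-f_1(t)|^2)=\sum_{j=1}^{N}|a_je^{i\lambda_j\tau}-b_j|^2+\sum_{j>N}|a_je^{i\lambda_j\tau}-b_j|^2 .
\end{equation*}
The first sum equals $M(|f^{(N)}(t+\tau)-f_1^{(N)}(t)|^2)\leq\sup_{t}|f^{(N)}(t+\tau)-f_1^{(N)}(t)|^2<\varepsilon/2$, while in the second sum $|a_je^{i\lambda_j\tau}-b_j|\leq|a_j|+|b_j|=2|a_j|$ gives $\sum_{j>N}|a_je^{i\lambda_j\tau}-b_j|^2\leq4\sum_{j>N}|a_j|^2<\varepsilon/2$; hence $M(|f(t+\tau)-f_1(t)|^2)<\varepsilon$ for every $\tau$ in the satisfactorily uniform set $E$, which is the ``in fact'' statement. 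Taking $\varepsilon=1/n$ and choosing, for each $n$, a point $\tau_n$ of the resulting set with $\tau_n>\max\{\tau_{n-1},n\}$ (possible since a satisfactorily uniform set is unbounded) then produces an increasing unbounded sequence $\{\tau_n\}_{n\geq1}$ with $M(|f(t+\tau_n)-f_1(t)|^2)<1/n\to0$, i.e.\ $f(t+\tau_n)\to f_1$ in $B^2(\mathbb{R},\mathbb{C})$.

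The step I expect to be the main obstacle is upgrading Corollary \ref{corol3} from ``relatively dense'' to ``satisfactorily uniform'', since in general a relatively dense set need not be satisfactorily uniform. I would settle it by observing that the set of $\tau$ produced there arises, through the Kronecker-theorem argument of Theorem \ref{prop3}, as a set of return times of a linear flow on a finite-dimensional torus to a box, and that such return-time sets are satisfactorily uniform; this is precisely the form of Kronecker's theorem that underlies Besicovitch's $B^p$ theory \cite{Besi}. Equivalently, one simply records this stronger conclusion already in the statements of Theorem \ref{prop3} and Corollary \ref{corol3}, whose proofs then remain unchanged.
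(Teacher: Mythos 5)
Your argument is correct in substance, but it reaches the conclusion by a genuinely different route from the paper's. The paper first invokes Theorem \ref{mth0} to produce one good translate $\delta_n$ with $M(|f(t+\delta_n)-f_1(t)|^2)<\varepsilon/2$, and then obtains the satisfactorily uniform set by composing $\delta_n$ with a satisfactorily uniform set of $B^2$-translation numbers of $f$ itself, whose existence it takes from Besicovitch's characterization of $B^2$-almost periodicity; you instead prove the ``in fact'' clause directly by truncating the Fourier series at level $N$ with $\sum_{j>N}|a_j|^2<\varepsilon/8$, applying Corollary \ref{corol3} to the equivalent partial sums, and controlling the tail via $|b_j|=|a_j|$ and Parseval, exactly the estimate used inside the paper's proof of Theorem \ref{mth0}. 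Your route has two advantages: it makes the quantitative clause primary and derives the sequence statement from it, and it avoids the paper's questionable step $M(|g+h|^2)\leq M(|g|^2)+M(|h|^2)$, which is not the triangle inequality for this seminorm (the paper's estimate is easily repaired, but as written it is wrong). The price you pay is concentrated where you say it is: Corollary \ref{corol3}, as stated and proved, yields only a relatively dense set, namely $\tau_1$ plus the set of uniform $\varepsilon/2$-translation numbers of the trigonometric polynomial, and your claim that ``the proofs then remain unchanged'' if one strengthens the statement is too quick. The upgrade does hold, but it needs the additional lemma of Besicovitch that the set of $\varepsilon$-translation numbers of a uniformly almost periodic function contains a satisfactorily uniform subset (equivalently, your torus return-time description); this is a citation to \cite{Besi}, not a free consequence of the existing Kronecker argument, which produces a single $\tau>d$ rather than a counted family. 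Since the paper's own proof leans on Besicovitch for the exactly analogous fact about $B^2$-translation numbers, this reliance is acceptable, but it should be stated as an explicit appeal to that lemma rather than as a cosmetic restatement of Theorem \ref{prop3} and Corollary \ref{corol3}.
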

\begin{proof}
Let $f\in B^2(\mathbb{R},\mathbb{C})$, then $f\in \mathcal{F}_{B^2,\Lambda}$ for some set $\Lambda$ of exponents. Let $\mathcal{G}$ be the equivalence class in $\mathcal{F}_{B^2,\Lambda}/\shortstack{$_{{\fontsize{6}{7}\selectfont *}}$\\$\sim$}$ so that $f\in\mathcal{G}$ and let $f_1\shortstack{$_{{\fontsize{6}{7}\selectfont *}}$\\$\sim$} f$. Thus, by Theorem \ref{mth0} (see also its proof), there exists an increasing unbounded sequence $\{\tau_n\}_{n\geq 1}$
of positive numbers such that the sequence of functions
$\{f(t+\tau_n)\}_{n\geq 1}$ converges in $B^2(\mathbb{R},\mathbb{C})$ to $f_1(t)$. Equivalently, given $\varepsilon>0$ there exists $n_0\in\mathbb{N}$ such that
$$M(|f(t+\delta_n)-f_1(t)|^2)<\varepsilon/2\ \forall n\geq n_0.$$
Moreover, since $f(t)$ is almost periodic in the sense of Besicovitch, 
there exist a set $S=\{\tau_k\}\subset \mathbb{R}$  and $l=l(\varepsilon)>0$ such that the ratio of the maximum number of elements of $S$ included in an interval $(a,a+l)$ to the minimum number is less than $2$ and satisfy
$$M(|f(t+\tau_k)-f(t)|^2)<\varepsilon/2.$$
Hence any $\tau_k$ satisfies
\begin{multline*}
M(|f(t+\delta_n+\tau_k)-f_1(t)|^2)\leq M(|f(t+\delta_n+\tau_k)-f(t+\delta_n)|^2)+\\+M(|f(t+\delta_n)-f_1(t)|^2)
<\varepsilon\ \ \forall n\geq n_0,
\end{multline*}
which proves the result.

\end{proof}

It is known that the almost periodic functions in the Besicovitch spaces $B^p(\mathbb{R},\mathbb{C})$, $1\leq p<\infty$, satisfy the Bochner-type property consisting of the relative compactness of the set $\{f(t+\tau)\}$, $\tau\in\mathbb{R}$, associated with an arbitrary function $f\in B^p(\mathbb{R},\mathbb{C})$ (see \cite[Theorem 5.34]{Andres} or \cite[Section 3.4]{Corduneanu}).
As an important consequence of Theorem \ref{mth0}, we next refine this property for the case of $B^2(\mathbb{R},\mathbb{C})$ in the sense that we show that the condition of
almost periodicity, in the sense of Besicovitch, of a function $f(t)$ implies that every sequence $\{f(t+\tau_n)\}$, $\tau_n\in\mathbb{R}$, of translates of $f$ has a subsequence that converges with the topology of $B^2(\mathbb{R},\mathbb{C})$ to a function which is equivalent to $f$.

\begin{corollary}
If $f\in B^2(\mathbb{R},\mathbb{C})$, then the compact closure of its set of translates coincides with its equivalence class.
\end{corollary}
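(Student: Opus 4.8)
The plan is to combine Theorem~\ref{mth0} with the Bochner-type property for $B^2(\mathbb{R},\mathbb{C})$ cited just above. Let $f\in B^2(\mathbb{R},\mathbb{C})$ and denote by $\mathcal{G}$ its equivalence class in $\mathcal{F}_{B^2,\Lambda}/\shortstack{$_{{\fontsize{6}{7}\selectfont *}}$\\$\sim$}$, where $\Lambda$ is the set of Fourier exponents of $f$. Write $\overline{\mathcal{T}_f}$ for the closure, with respect to the topology of $B^2(\mathbb{R},\mathbb{C})$, of the set of translates $\mathcal{T}_f=\{f(t+\tau):\tau\in\mathbb{R}\}$. The claim is that $\overline{\mathcal{T}_f}=\mathcal{G}$, and that this closure is compact.

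First I would establish the inclusion $\overline{\mathcal{T}_f}\subseteq\mathcal{G}$. Every translate $f_\tau$ has the same Fourier series up to the substitution $a_j\mapsto a_j e^{i\lambda_j\tau}$, so $f_\tau\shortstack{$_{{\fontsize{6}{7}\selectfont *}}$\\$\sim$} f$ (this is exactly the observation used in the proof of Corollary~\ref{cor5}, via the $\mathbb{Q}$-linear map $\psi_n(\lambda)=\lambda\tau$); hence $\mathcal{T}_f\subseteq\mathcal{G}$. By Proposition~\ref{prop} the class $\mathcal{G}$ is sequentially compact, hence closed in $B^2(\mathbb{R},\mathbb{C})$, so $\overline{\mathcal{T}_f}\subseteq\mathcal{G}$. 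For the reverse inclusion $\mathcal{G}\subseteq\overline{\mathcal{T}_f}$, I would invoke Theorem~\ref{mth0} directly: it asserts that $\mathcal{T}_f$ is dense in $\mathcal{G}$, i.e.\ $\mathcal{G}\subseteq\overline{\mathcal{T}_f}$. Combining the two inclusions gives $\overline{\mathcal{T}_f}=\mathcal{G}$.

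It remains to note that $\overline{\mathcal{T}_f}$ is in fact compact, not merely closed. This is immediate once we know $\overline{\mathcal{T}_f}=\mathcal{G}$: Proposition~\ref{prop} states that $\mathcal{G}$ is sequentially compact, and since $B^2(\mathbb{R},\mathbb{C})$ (more precisely, the quotient $B^2(\mathbb{R},\mathbb{C})/\simeq$ with the Besicovitch seminorm, which becomes a metric space) is metrizable, sequential compactness of $\mathcal{G}$ is equivalent to compactness. Alternatively, one can cite the Bochner-type property from \cite[Theorem 5.34]{Andres} to get that $\overline{\mathcal{T}_f}$ is compact a priori, and then identify it with $\mathcal{G}$ using the two inclusions above; the conclusion is the same.

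The main point to be careful about is purely a matter of bookkeeping rather than a genuine obstacle: one must make sure that the ambient space in which ``closure'' and ``compactness'' are taken is the metric space $B^2(\mathbb{R},\mathbb{C})/\simeq$, so that closedness plus the (sequential) compactness supplied by Proposition~\ref{prop} actually yield topological compactness, and that Theorem~\ref{mth0}'s density statement is phrased in the same topology. Since the preceding results (Corollary~\ref{cor5}, Theorem~\ref{mth0}, Corollary~\ref{cmth}) already operate consistently in this setting, no new technical work is needed, and the corollary is essentially a restatement of $\overline{\mathcal{T}_f}=\mathcal{G}$ together with Proposition~\ref{prop}.
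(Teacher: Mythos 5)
Your proof is correct and follows essentially the same route as the paper: the inclusion $\overline{\mathcal{T}_f}\subseteq\mathcal{G}$ comes from Corollary~\ref{cor5} together with the closedness supplied by Proposition~\ref{prop}, the reverse inclusion is exactly the density statement of Theorem~\ref{mth0}, and compactness follows from sequential compactness of $\mathcal{G}$ in the (pseudo)metric setting of $B^2(\mathbb{R},\mathbb{C})$. Your version is in fact slightly more careful than the paper's, which compresses both inclusions and the compactness remark into a single sentence.
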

\begin{proof}
First of all, we recall that any function $f\in B(\mathbb{R},\mathbb{C})$ has an associated Fourier series.
Let $f\in B^2(\mathbb{R},\mathbb{C})$, then $f\in \mathcal{F}_{B^2,\Lambda}$ for some set $\Lambda$ of exponents. Now, let $\mathcal{G}$ be the equivalence class in $\mathcal{F}_{B^2,\Lambda}/\shortstack{$_{{\fontsize{6}{7}\selectfont *}}$\\$\sim$}$ so that $f\in\mathcal{G}$.
By Theorem \ref{mth0}, all the limit points of the translates of $f$ are exponential sums which are included in $\mathcal{G}$ and, in fact, the compact closure of the set of the translates of $f$ coincides with $\mathcal{G}$.
\end{proof}

\begin{remark}
Given $\Lambda=\{\lambda_1,\lambda_2,\ldots,\lambda_j,\ldots\}$ a set of exponents, consider $A_1(p)$ and $A_2(p)$ two exponential sums in the class $\mathcal{S}_{\Lambda}$, say
$A_1(p)=\sum_{j\geq1}a_je^{\lambda_jp}$ and $A_2(p)=\sum_{j\geq1}b_je^{\lambda_jp}.$ Let $V$ be the $\mathbb{Q}$-vector space generated by $\Lambda$.
We will say that $A_1$ is $B$-equivalent to $A_2$ if there exists a $\mathbb{Q}$-linear map $\psi_n:V\to\mathbb{R}$ such that
$$b_j=a_je^{i\psi_n(\lambda_j)},\ j=1,2,\ldots.$$
It is easy to prove that, fixed a basis $G_{\Lambda}$ for $\Lambda$, $A_1$ is $B$-equivalent to $A_2$ 
if and only if there exists $\mathbf{x}_0=(x_{0,1},x_{0,2},\ldots,x_{0,k},\ldots)\in \mathbb{R}^{\sharp G_{\Lambda}}$
such that $b_j=a_j e^{<\mathbf{r}_j,\mathbf{x}_0>i}$ for every $j\geq 1$, where the $\mathbf{r}_j$'s are the vectors of rational components verifying (\ref{errej}). 

From this and Proposition \ref{DefEquiv}, it is worth noting that Definition \ref{DefEquiv00} and definition of $B$-equivalence are equivalent in the case that it is possible to obtain an integral basis for the set of exponents $\Lambda$. Consequently, all the results of this paper which can be formulated in terms of an integral basis are also valid under the $B$-equivalence. (in particular, those related to the finite exponential sums in Section \ref{section3}).
\end{remark}

\bibliographystyle{amsplain}

\end{document}